\documentclass[a4paper,12pt]{article}
\usepackage{amsmath,amssymb}
\usepackage{amsthm}
\usepackage[dvipdfmx]{graphicx}
\usepackage{mathrsfs}
\usepackage[active]{srcltx}
\usepackage{amscd}
\usepackage{cases}
\usepackage{cite}
\usepackage{times}
 \newtheorem{theorem}{Theorem}[section]
 \newtheorem{proposition}[theorem]{Proposition}
 
 \newtheorem{lemma}[theorem]{Lemma}
 
\theoremstyle{definition}
 \newtheorem{definition}[theorem]{Definition}
 \newtheorem{remark}[theorem]{Remark}
 
\numberwithin{equation}{section}

\newcommand{\R}{\boldsymbol{R}}
\newcommand{\N}{\boldsymbol{N}}

\newcommand{\A}{\mathcal{A}}

\newcommand{\E}{\mathcal{E}}
\newcommand{\pmt}[1]{{\begin{pmatrix} #1  \end{pmatrix}}}

\newcommand{\sgn}{\operatorname{sgn}}

\newcommand{\rank}{\operatorname{rank}}

\renewcommand{\phi}{\varphi}

\renewcommand{\Gamma}{\varGamma}
\newcommand{\ep}{\varepsilon}
\newcommand{\RR}{{\mathcal R}}
\newcommand{\trans}[1]{{\vphantom{#1}}^t{\!#1}}
\usepackage[dvipsnames]{xcolor}

\begin{document}
\title{Geometry of $D_4^-$-front singularities and a Gauss-Bonnet type formula}
\author{Kentaro Saji}
\date{\today}
\maketitle

\footnote[0]{ 2020 Mathematics Subject classification. Primary
57R45; Secondary 53A05.}
\footnote[0]{Keywords and Phrases. $D_4^-$-singularity, fronts, normal form}
\footnote[0]{
Partly supported by the
JSPS KAKENHI Grant Numbers 25K07001 and 22KK0034.}

\begin{abstract}
We construct a form of the $D_4^-$-singularity of fronts 
in $\R^3$ which uses
coordinate transformation on the source and isometry on the target.
As an application, we compute differential geometric invariants 
near the $D_4^-$-singularity, and give
a Gauss-Bonnet type theorem for one-parameter generic fronts.
\end{abstract}

\section{Introduction}
Wave fronts and frontals in three-dimensional space constitute a class of
singular surfaces that admit well-defined normal vector even along
their singular sets.  
Over the past decades, the differential
geometry of such objects has been extensively studied, 
with a particular focus on
the local geometry of generic singularities such as cuspidal
edges and swallowtails. 
These singularities, together with their associated
invariants, play a central role in the geometry of surfaces with
singularities and have been studied from various viewpoints.
\cite{dz,hhnsuy,hhnuy,hnuy,ms,msuy,nuy,OTflat,suyfront}.
The fundamental differential geometric invariants of cuspidal edges is
introduced in \cite{suyfront}. 
When one considers generic one-parameter families of fronts, additional
bifurcation phenomena arise, including several corank-one bifurcations and
the $D_4^\pm$-type bifurcations described in \cite[Section~22.1]{agv}.
In this paper, we refer to the central singular point occurring in a
$D_4^\pm$-bifurcation simply as a \emph{$D_4^\pm$-singularity}.

The notion of an $SO(3)$-{\it normal form}, or
simply a {\it normal form\/} used
here, is a parametrization obtained by
appropriate coordinate changes in the source and isometries in the target
\cite{WE}. Normal form produces one method to investigate
higher-order geometric invariants of singular point.
For a deeper geometric understanding of singularities, higher-order
invariants are indispensable.  
Higher order invariants of the cuspidal edges are
studied in \cite{nuy}, where the
moduli of isometric deformations were also determined.
In \cite{d4geom},
a normal form for $D_4^+$-singularity are obtained and
fundamental geometric properties are studied.

A $D_4^-$-singularity appears on
the parallel surface of a surface
at ellipitic/hyperbolic umbilic point, 
and it also appears
on minimal surfaces,
this singularity is geometrically remarkable.
In this paper, we focus on the $D_4^-$-singularity.  
We construct a normal form for this singularity, compute the associated
differential geometric invariants in its neighborhood, and show a
Gauss-Bonnet type formula for fronts that may admit this class of
singular points.

The precise definition of fronts, and
the $D_4^-$-singularity is given as follows:
The unit cotangent bundle $T^*_1\R^{3}$ of $\R^{3}$ has the
canonical contact structure and can be identified with the unit
tangent bundle $T_1\R^{3}$. Let $\omega$ denote the canonical
contact form on it. A map $i:M\to T_1\R^{3}$ is said to be 
{\it
isotropic\/} if the pull-back $i^*\omega$ vanishes
identically, where $M$ is a $2$-manifold. 
If $i$ is an immersion, then the image $i(M)$ is a Legendre
submanifold, and the image of $\pi\circ i$ is called
the {\it wave front set},
where $\pi:T_1\R^{3}\to\R^{3}$ is the canonical projection and we
denote it by\/ $W(i)$. Moreover, $i$ is called the 
{\it Legendrian lift\/} of $W(i)$. 
With this framework, we define the notion of
fronts as follows: A map-germ $f:(\R^2,0) \to (\R^{3},0)$ is
called a {\it frontal\/} 
if there exists a unit vector field 
(called {\it unit normal of\/} $f$)
$\nu$ of $\R^{3}$ along $f$
such that
$L=(f,\nu):(\R^2,0)\to (T_1\R^{3},0)$ is
an isotropic map by an identification 
$T_1\R^3 = \R^3 \times S^2$, where $S^2$ is 
the unit sphere in $\R^3$ (cf. \cite{agv}, see also \cite{krsuy}).
A frontal $f$ is a {\it front\/} if the above $L$ can be taken as
an immersion.
A point $q\in (\R^2, 0)$ is a singular point if $f$ is not an
immersion at $q$.
A map-germ $f:(\R^2,0)\to(\R^3,0)$ is called the
{\it $D_4^-$-singularity\/}
if $f$ is $\mathcal{A}$-equivalent to
\begin{equation}\label{eq:f0}
f_0^-=
\left(
\dfrac{u^2}{2}-\dfrac{v^2}{2},\ 
u v,\ 
\dfrac{3u^2 v- v^3}{2}
\right)
\end{equation}
at the origin.
Here, two map-germs $f,g:(\R^2,0)\to(\R^3,0)$
is said to be ${\mathcal A}$-{\it equivalent\/}
if there exist diffeomorphism-germs
$\phi:(\R^2,0)\to(\R^2,0)$ and 
$\Phi:(\R^2,0)\to(\R^3,0)$ such that
$$
\Phi\circ g\circ \phi^{-1}=f
$$
holds.
We remark that the function
 $h=u^2v/2-v^3/3!$ has a $D_{4}^-$-singularity
at the origin, and 
the discriminant set ${\mathcal D}_H$ of a Moser family
$H=h-u y - v x + z$ of $h$
is parameterized by
$$
{\mathcal D}_H=\left\{\left(
\dfrac{u^2}{2}-\dfrac{v^2}{2},\ 
u v,\ 
\dfrac{3u^2 v- v^3}{2}
\right)
\,\Big|\,(u,v)\in\R^2\right\}.
$$
\section{Normal form}
\subsection{Preliminaries for normal form}
For a frontal $f:(\R^2,0)\to (\R^3,0)$
with a unit normal $\nu$ and for a coordinate system $(u,v)$,
the function 
\begin{equation}\label{eq:lambda}
\lambda=\det(f_u,f_v,\nu)
\end{equation}
is called the {\it signed area density function}
$($with respect to $(u,v)$ and $\nu)$.
The signed area density function is multiplied by a
non-zero function by changing coordinate system
and a unit normal vector field.

We set $\E_2=C^\infty(2,1)$ the set of function germs of two variables
at the origin, and $\langle u^2+v^2\rangle_{\E_2}$
the ideal generated by $u^2+v^2$ in $\E_2$.
\begin{definition}
A function $h:(\R^2,0)\to(\R^k,0)$ is 
{\it rotational-radial compatible} with respect to a
coordinate system $(u,v)$
if
\begin{equation}\label{eq:compati}
uh_u-vh_v\in\langle u^2+v^2\rangle_{\E_2},\quad
vh_u+uh_v\in\langle u^2+v^2\rangle_{\E_2}.
\end{equation}
\end{definition}
One can easily see that the condition
\eqref{eq:compati} is equivalent to
\begin{equation}\label{eq:compati2}
h_u=u\bar{p}+v\bar{q},\quad
h_v=-v\bar{p}+u\bar{q}
\end{equation}
for some functions $\bar{p},\bar{q}$.
If $h$ is written as 
\begin{equation}\label{eq:dset}
h(u,v)=h_1(u^2,v^2)+uh_2(u^2,v^2)+vh_3(u^2,v^2)+uvh_4(u^2,v^2),
\end{equation}
then \eqref{eq:compati} is equivalent to
\begin{equation}\label{eq:diffeq}
\begin{array}{ll}
(h_1)_u+(h_1)_v\Big|_{(u,v)=(t,-t)}&=0,\\
h_2-2v((h_2)_u+(h_2)_v)\Big|_{(u,v)=(t,-t)}&=0,\\
h_3+2v((h_3)_u+(h_3)_v)\Big|_{(u,v)=(t,-t)}&=0,\\
(h_4)_u+(h_4)_v\Big|_{(u,v)=(t,-t)}&=0.
\end{array}
\end{equation}
Although this condition is stated for a function $h$, 
it can be expressed in terms of the homogeneous components 
of the same degree as follows.
Let $n\in\N$ and let a function $h$ is written as
$h=\sum_{k=2}^n\sum_{i+j=k}
h_{ij}u^iv^j/(i!j!)$. Then the condition
\eqref{eq:diffeq} for the terms $i+j\leq 6$ are
\begin{equation}\label{eq:rotradfunc}
\begin{array}{rc}
h_{02}+h_{20}&=0,\\
(3h_{12}+h_{30},h_{03}+3h_{21})&=(0,0),\\
(-h_{04}+h_{40},h_{13}+h_{31})&=(0,0),\\
(-3h_{14}-2h_{32}+h_{50},-h_{05}+2h_{23}+3h_{41})&=(0,0),\\
(h_{06}-5h_{24}-5h_{42}+h_{60},
-h_{15}+h_{51})&=(0,0).
\end{array}
\end{equation}
Let $f:(\R^2,0)\to(\R^3,0)$ be a frontal satisfying
$df_0=0$.
A coordinate system $(u,v)$ is {\it adapted}\/ if
there exist a pair of linearly independent
vector fields $\{p,q\}$ along $f$ such that
\begin{equation}\label{eq:compati1}
f_u=up+vq,\quad f_v=-vp+uq.
\end{equation}
The condition \eqref{eq:compati1} is equivalent to
\begin{equation}\label{eq:compati3}
uf_u-vf_v=(u^2+v^2)p,\quad vf_u+uf_v=(u^2+v^2)q.
\end{equation}
The above pair $\{p,q\}$ obtained 
in the coordinate system $(u,v)$,
is called an {\it associated frame} of $f$.
The condition \eqref{eq:compati1} implies that the
each coordinate function $f_1,f_2,f_3$ of $f=(f_1,f_2,f_3)$ is 
rotational-radial compatible with respect to $(u,v)$.
On an adapted coordinate system,
the signed area density function is a non-zero multiple
of $u^2+v^2$.
However, the property that the signed area density function is 
a non-zero multiple
of $u^2+v^2$, does not imply that it is an adapted coordinate system.
For example, if $(u,v)$ and $f$ satisfy
$f_u=up+vq$ and $f_v=(-v+u a(u,v))p+(u+va(u,v))q$,
then the signed area density function is 
$(u^2+v^2)|p\times q|^2$. 

In this paper, we consider
a frontal $f$ satisfying $df_0=0$ and that
there exists an adapted coordinate system.
We see the condition for
front.
\begin{lemma}\label{lem:basisplfr}
Let $f$ be a frontal satisfying
$\rank df_0=0$.
We assume that a coordinate system $(u,v)$
is adapted, and $\{p,q\}$ the associated frame.
Then $f$ is a front at $0$ if and only if
\begin{equation}\label{eq:upvqfr}
\det\pmt{
\det(p_u,p,q)&\det(p_v,p,q)\\
\det(q_u,p,q)&\det(q_v,p,q)}(0,0)\ne0.
\end{equation}
\end{lemma}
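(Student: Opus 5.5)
Since $\rank df_0=0$, the Legendrian lift $L=(f,\nu)$ satisfies $dL_0=(0,d\nu_0)$. So $f$ is a front at $0$ if and only if $\nu_u(0,0)$ and $\nu_v(0,0)$ are linearly independent. The plan is to identify $\nu$ explicitly in terms of the associated frame $\{p,q\}$, and then express the linear independence of $\nu_u,\nu_v$ through the pairings of these derivatives with $p$ and $q$.

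\emph{Step 1: identify the unit normal.} By \eqref{eq:compati1},
\[
(f_u,f_v)=(p,q)\pmt{u&-v\\ v&u}.
\]
The matrix on the right has determinant $u^2+v^2$, which is nonzero away from the origin. There $f_u,f_v$ therefore span the same plane as $p,q$, and $\nu=\pm\, p\times q/|p\times q|$. Both sides are continuous and $p\times q\neq0$ near $0$, since $p,q$ are linearly independent. Hence, after possibly replacing $\nu$ by $-\nu$ (which does not affect the front property), we have $\nu=n/|n|$ on a whole neighbourhood of $0$, where $n=p\times q$.

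\emph{Step 2: pair the derivatives with the frame.} Differentiating $\langle \nu,p\rangle=0$ and $\langle\nu,q\rangle=0$ gives
\[
\langle \nu_u,p\rangle=-\langle\nu,p_u\rangle=-\frac{\det(p_u,p,q)}{|n|},
\]
using $\langle p\times q,x\rangle=\det(p,q,x)=\det(x,p,q)$. Analogous formulas hold for $\langle\nu_v,p\rangle$, $\langle\nu_u,q\rangle$ and $\langle\nu_v,q\rangle$. Consequently
\[
\pmt{\langle\nu_u,p\rangle&\langle\nu_v,p\rangle\\ \langle\nu_u,q\rangle&\langle\nu_v,q\rangle}
=-\frac{1}{|n|}\pmt{
\det(p_u,p,q)&\det(p_v,p,q)\\
\det(q_u,p,q)&\det(q_v,p,q)}.
\]

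\emph{Step 3: conclude.} Since $|\nu|=1$, the vectors $\nu_u,\nu_v$ lie in $\nu^\perp$, which is $\operatorname{span}\{p,q\}$. Because $p,q$ form a basis of this plane, the map $x\mapsto(\langle x,p\rangle,\langle x,q\rangle)$ is a linear isomorphism from $\nu^\perp$ onto $\R^2$. Thus $\nu_u,\nu_v$ are linearly independent at $0$ if and only if the left-hand matrix of Step 2 is nonsingular at $0$. By the identity in Step 2, this holds if and only if \eqref{eq:upvqfr} holds.

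The only point requiring care is Step 1: $\nu$ is defined via $f_u,f_v$, which vanish at $0$, so one needs the continuity argument above to get $\nu=\pm p\times q/|p\times q|$ at the origin itself. The remaining steps are direct computation.
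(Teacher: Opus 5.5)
Your proof is correct, and its skeleton matches the paper's: the normal is parallel to $p\times q$, and being a front at $0$ means the normal direction is immersive there. The key computation, however, is different.

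The paper keeps the non-unit normal $n=p\times q$. It uses that $f$ is a front iff $\det(n,n_u,n_v)\ne0$ at $0$, since rescaling a normal by a nonzero function only multiplies this determinant by a nonzero factor. It then asserts the exact vector identity that $\det(n,n_u,n_v)$ equals the $2\times2$ determinant in \eqref{eq:upvqfr}. One checks this by decomposing $p_u,q_u,p_v,q_v$ in the frame $\{p,q,n\}$ and using $\det(n,n\times q,p\times n)=|n|^4$.

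You instead normalize, differentiate $\langle\nu,p\rangle=\langle\nu,q\rangle=0$, and detect independence of $\nu_u,\nu_v$ through the isomorphism $\nu^\perp\to\R^2$, $x\mapsto(\langle x,p\rangle,\langle x,q\rangle)$. This recovers the matrix only up to the factor $-1/|p\times q|$, which is all that is needed, and it avoids the triple-product identity altogether.

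The paper's route gives an exact equality with no normalizing factor. Yours makes it more transparent why the determinant measures the rank of $d\nu_0$. Your Step 1 also supplies the continuity argument the paper uses without comment when it calls $p\times q$ a normal vector field: that $\nu\perp p,q$ holds at the origin as well, and that $\nu$ is unique up to sign, so the choice of unit normal in the definition of a front is immaterial.
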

\begin{proof}
Set $\nu=p\times q$. Then $\nu$ is a normal vector field
not necessary unit.
Thus $f$ is a front if and only if $\det(\nu,\nu_u,\nu_v)\ne0$.
By a fundamental vector calculus,
$$
\det(\nu,\nu_u,\nu_v)
=
\det(p\times q,p_u\times q+p\times q_u,
p_v\times q+p\times q_v)
=
\det\pmt{
\det(p_u,p,q)&\det(p_v,p,q)\\
\det(q_u,p,q)&\det(q_v,p,q)}
$$
holds, and this shows the assertion.
\end{proof}
For the germ $f_0^-$ in \eqref{eq:f0},
there exists a coordinate system
$(u,v)$ and $\{p,q\}$ such that
$$
(f_0^-)_u=up+vq,\quad
(f_0^-)_v=-vp+ u q
$$
holds, where
$$
p=(1,0,v),\quad q=(0,1,u).
$$
The vector $\nu=p\times q/|p\times q|$ gives a unit normal.
If $f$ is a $D_4^-$-singularity,
then there exists an adapted coordinate system as
the following lemma.
\begin{lemma}\label{lem:basispl}
Let $f$ be a $D_4^-$-singularity.
Then there exists $g$ such that
$g$ is $\RR$-equivalent to $f$ and 
there exists an adapted coordinate system.
\end{lemma}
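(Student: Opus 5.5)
The plan is to transport the adapted coordinate system of the model germ $f_0^-$ to $f$. The model already has one: with $p=(1,0,v)$ and $q=(0,1,u)$ we have $(f_0^-)_u=up+vq$ and $(f_0^-)_v=-vp+uq$, and $p,q$ are linearly independent everywhere. The condition \eqref{eq:compati1} involves only first derivatives of $f$ and is linear in them, so it should survive composition with a target diffeomorphism, which acts on derivatives by its Jacobian.

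Since $f$ is a $D_4^-$-singularity, there are diffeomorphism-germs $\phi$ on $(\R^2,0)$ and $\Phi$ on $(\R^3,0)$ with $\Phi\circ f_0^-\circ\phi^{-1}=f$. I will set
$$
g=f\circ\phi=\Phi\circ f_0^- .
$$
Then $g$ is $\RR$-equivalent to $f$, because it differs from $f$ only by the source coordinate change $\phi$. In the coordinates $(u,v)$ of the model, the chain rule gives
$$
g_u=d\Phi_{f_0^-(u,v)}\bigl((f_0^-)_u\bigr),\qquad
g_v=d\Phi_{f_0^-(u,v)}\bigl((f_0^-)_v\bigr).
$$
Define the vector fields along $g$
$$
P(u,v)=d\Phi_{f_0^-(u,v)}\,p(u,v),\qquad Q(u,v)=d\Phi_{f_0^-(u,v)}\,q(u,v).
$$
By linearity of $d\Phi$ we get $g_u=uP+vQ$ and $g_v=-vP+uQ$. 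Moreover $P$ and $Q$ are linearly independent near $0$, since $d\Phi$ is invertible and $p,q$ are independent. Hence $(u,v)$ is an adapted coordinate system for $g$, with associated frame $\{P,Q\}$.

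It remains to check that $g$ satisfies the standing hypotheses. First, $g$ is a frontal with $dg_0=0$. A unit normal is obtained by normalizing the cofactor transform $(d\Phi^{-1})^T\nu_0$ of the model normal $\nu_0=p\times q/|p\times q|$, which is equivalently proportional to $P\times Q$. Second, $g$ is a front, because $\mathcal{A}$-equivalence preserves the property of being a front. Alternatively, one can verify this directly through Lemma \ref{lem:basisplfr}.

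The only delicate point is interpretive rather than computational. The lemma must be read as allowing $g$ to differ from $f$ by a target diffeomorphism that is absorbed into the choice of $g$, namely $g=\Phi\circ f_0^-$. It does not assert that the original $f$ itself admits an adapted system in some coordinates. With that reading, the argument above is essentially complete. No step presents a genuine obstacle beyond confirming that $P$ and $Q$ stay smooth and independent, which follows from the invertibility of $d\Phi$.
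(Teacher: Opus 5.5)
Your argument is correct and is the same as the paper's: set $g=f\circ\phi=\Phi\circ f_0^-$ and push the model frame forward by $d\Phi$; the paper's $\Phi_X+v\Phi_Z$ and $\Phi_Y+u\Phi_Z$ are exactly $d\Phi\,p$ and $d\Phi\,q$. Two small corrections: first, for $f_0^-$ as written in \eqref{eq:f0} the model frame should be $p=(1,0,3v/2)$, $q=(0,1,3u/2)$ (the factor $3/2$ is missing both in your proposal and in the paper, which does not affect the argument); second, your closing remark is mistaken, because $g$ differs from $f$ only by the source change $\phi$, so $\phi^{-1}$ is itself an adapted coordinate system for $f$ and no target diffeomorphism separates $f$ from $g$.
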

\begin{proof}
Since $f$ is a $D_4^-$-singularity,
there exist diffeomorphisms $\phi:(\R^2,0)\to(\R^2,0)$
and $\Phi:(\R^3,0)\to(\R^3,0)$
such that
$f(x,y)=\Phi\circ f_0^\ep\circ \phi(x,y)$.
Namely,
$f\circ \phi^{-1}(u,v)=\Phi\circ f_0^\ep(u,v)$ holds.
We set
$g(u,v)=f\circ \phi^{-1}(u,v)$. Then $g$ is 
$\RR$-equivalent to $f$.
We denote by
$(X,Y,Z)$
the coordinate system on the target.
Setting
$f_0^\ep=(f_{0,1}^\ep,f_{0,2}^\ep,f_{0,3}^\ep)$,
it holds that
\begin{align*}
g_u&=\Phi_X(f_{0,1}^\ep)_u+\Phi_Y(f_{0,2}^\ep)_u+\Phi_Z(f_{0,3}^\ep)_u
=u\Phi_X+v\Phi_Y+2uv\Phi_Z,\\
g_v&=\Phi_X(f_{0,1}^\ep)_v+\Phi_Y(f_{0,2}^\ep)_v+\Phi_Z(f_{0,3}^\ep)_v
=-v\Phi_X+u\Phi_Y+(u^2+v^2)\Phi_Z.
\end{align*}
Setting
$p=\Phi_X+v\Phi_Z$ and $q=\Phi_Y+u\Phi_Z$
we see $(u,v)$ is an adapted coordinate system,
and $\{p,q\}$ is an associated frame.
Since $g$ is $\RR$-equivalent to $f$,
we have the assertion.
\end{proof}
Since $g$ is $\RR$-equivalent to $f$,
their differential geometric properties
are the same.
Thus a $D_4^-$-singularity satisfies the conculusion of Lemma 
\ref{lem:basispl}.
By \cite[Theorem 1.1]{sajid4},
if $f$ satisfies the conculusion of Lemma 
\ref{lem:basispl}, and $f$ is a front,
then it is a $D_4^-$-singularity.
Moreover, we easily see the germ
$(u,v)\mapsto((u^2-v^2)/2,auv,0)$ satisfies
the conculusion of Lemma 
\ref{lem:basispl}.
We give a normal form for a germ
having an atepted coordinate system, which
is slightly a general object to the $D_4^-$-singularity.

Let us set
\begin{align*}
f_{{\rm n}2}
=&\left(\dfrac{u^2 - v^2}{2},\alpha u v,0\right)\\
f_{{\rm n}3}
=&\dfrac{1}{6}\Big(0,
-b_{12} u^3 + 3 b_{21} u^2 v + 3 b_{12} u v^2 - b_{21} v^3,
-c_{12} u^3 + 3 c_{21} u^2 v + 3 c_{12} u v^2 - c_{21} v^3\Big),\\
f_{{\rm n}4}
=&\dfrac{1}{24}\Big(0,
b_{40} u^4 + 4 b_{31} u^3 v - 4 b_{31} u v^3 + 
    b_{40} v^4,\\
&\hspace{50mm}
c_{40} u^4 + 4 c_{31} u^3 v + 6 c_{22} u^2 v^2 
- 4 c_{31} u v^3 +  c_{40} v^4\Big),\\
f_{{\rm n}5}
=&\dfrac{1}{120}\Big(0,
3 b_{14} u^5  + 5 b_{41} u^4 v 
+ 5 b_{14} u v^4 + 3 b_{41} v^5,\\
&
(3 c_{14} + 2 c_{32}) u^5 + 5 c_{41} u^4 v 
+ 10 c_{32} u^3 v^2+ 
    10 c_{23} u^2 v^3 + 5 c_{14} u v^4 + (2 c_{23} + 3 c_{41}) v^5\Big),
\end{align*}
and let us set
\begin{equation}\label{eq:fi}
f_k=\sum_{j=2}^kf_{{\rm n}j}
\quad(k=2,3,4,5).
\end{equation}
We show the following theorem.
\begin{theorem}\label{thm:normal1}
Let $f:(\R^2,0)\to (\R^3,0)$ be a frontal with $df_0=0$,
whose unit normal vector is $\nu$.
We assume that there exist an adapted coordinate system $(u,v)$.
Then for any $k\in\{3,4,5\}$, 
there exist an orientation-preserving diffeomorphism
$\phi:(\R^2,0)\to(\R^2,0)$ and $T\in SO(3)$ such that
\begin{equation}\label{eq:normald4m}
T\circ f\circ\phi^{-1}(u,v)
=
f_k(u,v)+(a(u,v),b(u,v),c(u,v)),
\end{equation}
where $a,b,c$ are rotaional-radial compatible functions
with respect to $(u,v)$
satisfying $j^ka(0)$ $=$ $j^kb(0)$ $=$ $j^kc(0)=0$, and
$\alpha\geq1$, $c_{21}\geq0$, $c_{12}\geq0$.
\end{theorem}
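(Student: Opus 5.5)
The plan is to normalize the Taylor expansion of $f$ in an adapted coordinate system degree by degree. The only source transformations allowed are those that keep the coordinate system adapted. The target transformations are rotations $T\in SO(3)$ (no translation is needed since $f(0)=0$).

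\textbf{Step 1: which coordinate changes preserve adaptedness.} By \eqref{eq:compati3}, adaptedness says that $(u,v)$ is ``conformal-like'' for the quadratic map $(u^2-v^2,2uv)$. In complex notation $z=u+iv$, condition \eqref{eq:compati1} reads $f_z\in \bar z\cdot(\text{stuff})$, where the components are rotational-radial compatible. Candidate allowed changes are:
\begin{itemize}
\item rotations $z\mapsto e^{i\theta}z$;
\item the reflection $(u,v)\mapsto(u,-v)$, used only together with an orientation-reversing element of $O(3)$ so that the composite is an $SO(3)$ normalization — I must check orientation carefully;
\item scalings $z\mapsto rz$;
\item higher order changes $z\mapsto z+\sum_{j\ge2}(\ldots)$ constrained so that \eqref{eq:compati} is preserved.
\end{itemize}
I would verify that rotations and scalings preserve \eqref{eq:compati}, since the generators $u\partial_u-v\partial_v$ and $v\partial_u+u\partial_v$ transform covariantly. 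Then I would determine, degree by degree, the space of homogeneous polynomial maps $\phi$ for which $f\circ\phi^{-1}$ is again compatible. These form the residual freedom used at degrees $3,4,5$.

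\textbf{Step 2: degree 2.} By \eqref{eq:rotradfunc}, each component's quadratic part is a combination of $(u^2-v^2)/2$ and $uv$. So $j^2f$ is given by a $3\times2$ matrix $M$ applied to $((u^2-v^2)/2,uv)$, and $M$ has rank $2$ because $p,q$ are independent at $0$ (as $p(0)=f_{uu}(0)$, $q(0)=f_{uv}(0)$). A rotation in the target puts the image of $M$ into the $XY$-plane. Rotating the source by $\theta$ acts on $((u^2-v^2)/2,uv)$ by rotation by $2\theta$, so I can apply a polar/singular value decomposition. A rotation on the source side together with one in the target makes $M$ diagonal $\mathrm{diag}(\sigma_1,\sigma_2)$. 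A scaling $z\mapsto rz$ sets $\sigma_1=1$, and with ordering $\alpha=\sigma_2/\sigma_1\ge1$ after possibly swapping (swapping uses rotation by $\pi/4$ in the source). The signs are adjusted by the rotation by $\pi$ about the $X$- or $Y$-axis, combined with the reflection if needed. This yields $f_{{\rm n}2}$.

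\textbf{Step 3: degrees 3, 4, 5.} The general compatible cubic, quartic and quintic have dimensions $4$, $6$ ($4$ plus the $c_{22}$ direction), and so on, by \eqref{eq:rotradfunc}. Once $j^2f=f_{{\rm n}2}$, the remaining freedom is:
\begin{itemize}
\item source changes $\phi=\mathrm{id}+\phi_{j}$ of higher degree preserving adaptedness;
\item rotations $T=\exp(A)$ close to the stabilizer, which is trivial when $\alpha>1$, or a circle when $\alpha=1$, with the remaining discrete signs.
\end{itemize}
At degree $k+1$, a change $\phi_{k}$ of degree $k$ changes the first two components by $d f_{{\rm n}2}\cdot\phi_{k}$. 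I would show that this linear map from admissible $\phi_k$ onto compatible degree-$(k+1)$ polynomials in the $X$-slot, and partly in the $Y$-slot, is surjective where claimed. This kills the $X$-component and half of the $Y$-component, leaving exactly the coefficients $b_{ij}$ displayed in $f_{{\rm n}j}$. The $Z$-component is untouched by source changes at leading order, so it stays as the full compatible polynomial. This matches the $c$ coefficients: $4$ in the cubic, $5$ in the quartic, $5$ in the quintic.

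The sign conditions $c_{21},c_{12}\ge0$ are obtained from the residual discrete symmetries preserving $f_{{\rm n}2}$: $(u,v)\mapsto(-u,-v)$ and $(u,v)\mapsto(u,-v)$ or $(-u,v)$, paired with diagonal sign matrices in $SO(3)$. These act on $(c_{12},c_{21})$ by independent sign flips, after matching $\det=+1$ using the freedom in the $Y$ and $Z$ signs.

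\textbf{Main obstacle.} I expect the hard part to be the linear algebra of Step 3: characterizing admissible $\phi_k$, since adaptedness is a nonlinear condition on $\phi$, and proving the exact image and cokernel. I would first treat it infinitesimally. Using \eqref{eq:compati2}, I would show that $\phi=\mathrm{id}+\phi_k$ preserves compatibility to order $k+1$ if and only if $\phi_k$, written as $\bar z\,\psi(z,\bar z)$-type expressions, satisfies a compatibility condition itself. Then I would compute ranks directly degree by degree for $k=3,4,5$. This is routine but messy, and I would perform it by computer algebra.
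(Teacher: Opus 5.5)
Your overall strategy is the paper's: normalize the $2$-jet by a conformal-linear source change plus a rotation (Proposition~\ref{prop:normalnotb}), kill higher-order terms degree by degree with changes that keep the coordinates adapted, and fix the signs of $c_{12},c_{21}$ with discrete symmetries of $f_{{\rm n}2}$. Two steps, however, do not work as written.

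\textbf{Orientation in the sign step.} You flip $c_{12}$ and $c_{21}$ independently using $(u,v)\mapsto(u,-v)$ or $(-u,v)$. These maps are orientation-reversing, but the theorem requires an orientation-preserving $\phi$, and choosing $T\in SO(3)$ does not repair this. Write $w=u+iv$. The symmetries you may use are $w\mapsto i^mw$, paired with $T=I$ for $m$ even and $T=\mathrm{diag}(-1,-1,1)$ for $m$ odd; for $\alpha>1$ these are the only ones. They give no independent sign flips. They do rotate the vector $(c_{12},c_{21})$ by multiples of $\pi/2$: substituting $(u,v)\to(-v,u)$ sends $(c_{12},c_{21})\mapsto(c_{21},-c_{12})$, and $w\mapsto-w$ negates it. Quarter-turns are enough to reach $c_{12},c_{21}\ge0$, and this is exactly the paper's choice. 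No reflection is needed in Step~2 either, since $\mathrm{diag}(1,-1,-1)$ already fixes the sign of $\sigma_2$.

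\textbf{The core of Steps 1 and 3 is left open, and the bookkeeping is wrong.} By \eqref{eq:rotradfunc}, each degree $\ge3$ imposes two linear conditions. So compatible homogeneous polynomials of degree $3,4,5$ have dimension $2,3,4$ per component. Correspondingly, $f_{{\rm n}3},f_{{\rm n}4},f_{{\rm n}5}$ carry $2,3,4$ coefficients $c_{ij}$, not $4,5,5$. The $Y$-part is not ``halved'' either: at degree $3$ none of it can be removed.

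The missing ingredient is a characterization of the admissible changes.
\begin{itemize}
\item Adaptedness means $f_w=w\,W$ with $W=(p-iq)/2$. Note it is $w$, not $\bar w$: for instance $\partial_w\mathrm{Re}(w^2)=w$.
\item Take $\phi=w+\psi$ with $\psi=\sum_j\psi_jw^j\bar w^{m-j}$ homogeneous of degree $m\ge2$. The chain rule gives $g_w=\phi\,W(\phi)\,\phi_w+\bar\phi\,\bar W(\phi)\,\overline{\phi_{\bar w}}$. Hence $g=f\circ\phi$ is adapted to all orders whenever $\psi_0=\psi_{m-1}=0$.
\item This condition is also necessary. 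The leading-order compatibility condition is $W_0\psi_0+\bar W_0\bar\psi_{m-1}=0$, and $W_0,\bar W_0$ are $\mathbb{C}$-independent.
\item The degree-$(m+1)$ part changes by $(\mathrm{Re}(w\psi),\,\alpha\,\mathrm{Im}(w\psi),\,0)$.
\end{itemize}
Applying this degree by degree:
\begin{itemize}
\item $m=2$: $\psi\in\mathbb{C}w^2$. This kills the $X$-cubic and leaves $b_{12},b_{21}$.
\item $m=3$: $\psi\in\langle w^3,w\bar w^2\rangle_{\mathbb C}$. This kills the $X$-quartic and adjusts the $|w|^4$-coefficient of $Y$ freely, so you can impose $b_{22}=0$.
\item $m=4$: $\psi\in\langle w^4,w^2\bar w^2,w\bar w^3\rangle_{\mathbb C}$. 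This kills the $X$-quintic and adjusts the $|w|^4\langle u,v\rangle$ part of $Y$, so you can impose $b_{32}=b_{23}=0$.
\end{itemize}
The surviving forms $b_{40}(u^4+v^4)+4b_{31}(u^3v-uv^3)$ and $b_{14}(3u^5+5uv^4)+b_{41}(5u^4v+3v^5)$ are genuine complements to these images. They contain no $u^2v^2$ term, respectively no $u^3v^2$ or $u^2v^3$ terms. The paper's explicit substitutions are of exactly this admissible form.
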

The right-hand side of \eqref{eq:normald4m} is called
a {\it normal form\/} of a $D_4^-$-singularity.
Let us set
\begin{equation}\label{eq:fiti}
\tilde{f}_k
=
\sum_{j=2}^k\tilde{f}_{{\rm n}j}
\quad(k=3,4,5),
\end{equation}
where each
$\tilde{f}_{{\rm n}j}$ is obtained from
$f_{{\rm n}j}$ by replacing all coefficients 
$\alpha$, $b_{ij}$ $c_{ij}$ 
in its defining expression with
$\tilde{\alpha}$, $\tilde{b}_{ij}$, $\tilde{c}_{ij}$ 
$(i,j\in\{0,1,\ldots,5\})$, respectively.
The uniqueness of the normal form 
holds as the following sense.
\begin{theorem}\label{thm:normal2}
Let $k\in\{3,4,5\}$.
Let $f_k$ be the map defined in \eqref{eq:fi}.
We assume $\alpha>1$ and $c_{21}>0$.
Let $\tilde{f}_k$ be the map defined in \eqref{eq:fiti}.
If there exist an orientation-preserving diffeomorphism
$\phi:(\R^2,0)\to(\R^2,0)$ and $T\in SO(3)$ such that
\begin{equation}\label{eq:normald4m2}
j^k\big(T\circ f_k\circ\phi^{-1}\big)(0,0)
=j^k\big(\tilde{f}_k\big)(0,0),
\end{equation}
satisfying $j^k\tilde{a}(0)=j^k\tilde{b}(0)=j^k\tilde{c}(0)=0$, and
$\tilde{\alpha}>1$, $\tilde{c}_{21}>0$, $\tilde{c}_{12}\geq0$
then $\phi$ and $T$ are identities.
In particular, $\alpha=\tilde{\alpha}$,
${b}_{ij}=\tilde{b}_{ij}$, ${c}_{ij}=\tilde{c}_{ij}$
holds for any 
$(i,j\in\{0,1,\ldots,5\},\ i+j=k)$.
\end{theorem}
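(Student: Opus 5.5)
We compare $T\circ f_k\circ\phi^{-1}$ with $\tilde f_k$ order by order in the Taylor expansion at the origin. We show first that $d\phi_0$ and $T$ are forced to be the identity by the quadratic and cubic parts, and then that the higher jets of $\phi$ vanish up to the order relevant for $j^k$.

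Write $\phi^{-1}(u,v)=A(u,v)+O(2)$ with $A\in GL^+(2)$. Since $f_k$ and $\tilde f_k$ have vanishing linear part, the $2$-jet condition reads
\[
T\Big(\tfrac{(A\cdot)_1^2-(A\cdot)_2^2}{2},\ \alpha (A\cdot)_1(A\cdot)_2,\ 0\Big)=\Big(\tfrac{u^2-v^2}{2},\ \tilde\alpha uv,\ 0\Big).
\]
The images of the quadratic parts span the planes $\{Z=0\}$ on both sides, so $T$ preserves the $Z$-axis. Hence $T$ is a rotation $R_\theta$ about the $Z$-axis, or, since $T\in SO(3)$, a rotation by $\pi$ about a horizontal axis. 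The pair of quadratic forms $(Q_1,Q_2)=((u^2-v^2)/2,\alpha uv)$ is mapped by $T$ to a linear combination of itself. Compare the two quadratic-form pencils: $Q_1$ and $Q_2$ are both traceless, while a generic combination is not, unless $A$ is conformal. Because $\alpha>1$, the ellipse traced by $\theta\mapsto (Q_1,Q_2)(\cos\theta,\sin\theta)=(\cos2\theta/2,\alpha\sin2\theta/2)$ has distinct axes. This forces $A$ to be conformal, $A=rR_\psi$. Matching the ellipse then gives $r=1$, $\tilde\alpha=\alpha$, and $\psi\in\{0,\pi/2,\pi,3\pi/2\}$, with $T$ one of the four symmetries of the ellipse, namely $\operatorname{diag}(\pm1,\pm1,\pm1)\in SO(3)$ compatible with $\psi$.

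Next we use the cubic part. The third components give $\pm c_{21}$ and $\pm c_{12}$ transformed under $R_\psi$ with $\psi\in\frac{\pi}{2}\mathbb{Z}$, and the sign conditions $c_{21}>0$, $\tilde c_{21}>0$, $\tilde c_{12}\ge0$ are used to kill the remaining finite ambiguity. This is the step I expect to be the main obstacle: one must check case by case that each of the nontrivial finite symmetries either flips the sign of $c_{21}$ or is excluded. Concretely, $u\mapsto -u$ type actions change $c_{12}$ to $-c_{12}$, and $v\mapsto -v$ changes $c_{21}$. Here $\alpha>1$ removes the extra freedom of swapping $u$ and $v$, and orientation preservation of $\phi$ together with $\det T=1$ pins the combination. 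I would tabulate the four candidates $(\psi,T)$ and their action on $(c_{21},c_{12})$ and conclude that only the identity survives given $c_{21},\tilde c_{21}>0$.

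With $T=\mathrm{id}$ and $d\phi_0=\mathrm{id}$, write $\phi^{-1}=\mathrm{id}+\eta$ with $\eta=O(2)$, and induct on the order $m$ of the first nonzero homogeneous part $\eta_m$. The degree-$(m+1)$ term of the first two components changes by $(u\eta_m^1-v\eta_m^2,\ \alpha(v\eta_m^1+u\eta_m^2))$. Two facts enter here. First, both sides are in normal form with the first component having no terms beyond degree $2$ up to order $k$. Second, the second components must lie in the specific restricted families spanned in $f_{{\rm n}j}$ (the rotational-radial compatible ones with the prescribed missing monomials, e.g.\ no $u^2v^2$ term, no pure $c$-type freedom). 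The linear map $\eta_m\mapsto(u\eta_m^1-v\eta_m^2,\ \alpha(v\eta_m^1+u\eta_m^2))$ is injective for $\alpha\ne 1$. Its image meets the allowed normal-form space only in $0$, which is exactly how the normal form in Theorem~\ref{thm:normal1} was designed. So $\eta_m=0$ up to the order affecting $j^k$. Hence $\phi$ is the identity at the relevant jet level, and comparing coefficients gives $b_{ij}=\tilde b_{ij}$ and $c_{ij}=\tilde c_{ij}$.
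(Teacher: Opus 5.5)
Your route is the same as the paper's. $T$ preserves the plane $Z=0$. The quadratic terms cut the linear part of $\phi$ down to four candidates: the identity, the half-turn $(u,v)\mapsto(-u,-v)$ with $T=\mathrm{id}$, and the two quarter-turns with $T=\operatorname{diag}(-1,-1,1)$. The cubic $c$-terms then select the identity, and the first two components kill the higher jets of $\phi$. Your conformality/ellipse argument for the first step is fine.

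\textbf{The gap is in your second step.} With only the sign conditions you list ($c_{21}>0$, $\tilde c_{21}>0$, $\tilde c_{12}\ge 0$), the identity is \emph{not} the only survivor. Write the cubic part of the third component as $\frac16\mathrm{Im}\big((c_{21}-ic_{12})z^3\big)$ with $z=u+iv$. The quarter-turn $\phi^{-1}(u,v)=(-v,u)$ with $T=\operatorname{diag}(-1,-1,1)$ gives $(\tilde c_{21},\tilde c_{12})=(-c_{12},c_{21})$. For example, $c_{21}=1$, $c_{12}=-1$ gives $\tilde c_{21}=\tilde c_{12}=1$, and every hypothesis you use holds with $\phi,T\neq\mathrm{id}$. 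The missing ingredient is $c_{12}\ge 0$ for $f_k$ itself. This is part of the normal form of Theorem~\ref{thm:normal1}, and the paper's proof invokes it (``by the assumption $c_{21}>0$ and $c_{12}\geq0$''). With it, the three nontrivial candidates are excluded as follows:
\begin{itemize}
\item the half-turn, since it gives $\tilde c_{21}=-c_{21}<0$;
\item the quarter-turn above, since $\tilde c_{21}=-c_{12}\le 0$;
\item the other quarter-turn, since $\tilde c_{12}=-c_{21}<0$.
\end{itemize}
Also, the maps $u\mapsto -u$, $v\mapsto -v$ and $u\leftrightarrow v$ you mention are orientation-reversing, so they are not among the candidates. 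What $\alpha>1$ actually removes is the continuous family $z\mapsto e^{i\psi}z$, compensated by a rotation about the $Z$-axis, that exists when $\alpha=1$.

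\textbf{Your last step.} The key claim there is true, but you only assert it, and it is the whole content of the higher-order uniqueness. The paper is equally terse: it even compares only $f_2$. The check is short.
\begin{itemize}
\item Vanishing of the degree-$(m+1)$ part of the first component forces $\eta_m=g\,(v,u)$ with $g$ homogeneous of degree $m-1$.
\item The second component then changes by $\alpha(u^2+v^2)g$. This must equal the difference of the degree-$(m+1)$ parts of the second components of $\tilde f_k$ and $f_k$.
\item Rotational-radial compatibility forces $g=0$ for $m=2$, $g\in\R(u^2+v^2)$ for $m=3$, and $g\in(u^2+v^2)\cdot\{\text{linear forms}\}$ for $m=4$.
\item The normalizations $b_{22}=\tilde b_{22}=0$, respectively $b_{32}=b_{23}=\tilde b_{32}=\tilde b_{23}=0$, then give $g=0$.
\end{itemize}
Injectivity of $\eta_m\mapsto(u\eta^1_m-v\eta^2_m,\alpha(v\eta^1_m+u\eta^2_m))$ holds for every $\alpha\neq0$, and it is not the property you need. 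The hypothesis $\alpha\ne1$ plays no role at this stage.
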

We will prove Theorems \ref{thm:normal1} and \ref{thm:normal2}
in the remainder of this section.
The form like the right-hand side of \eqref{eq:normald4m} is called
a {\it normal form\/} of $D_4^-$-singularity.
Such forms for Whitney umbrella is constructed in
\cite{WE}, and 
using the form, geometry of Whitney umbrella
is investigated, see \cite{bw,fh,hhnuy,taripair},
for example.
See \cite{ms,oscross,d4geom,shima1} for other normal forms
and its geometry.
\subsection{Normal form}
\begin{lemma}\label{lem:upvq}
Let $f:(\R^2,0)\to(\R^3,0)$ be a frontal satisfying
$\rank df_0=0$.
Let $(u,v)$ be an adapted coordinate system with
the associated frame $\{p,q\}$.
Then there exists an adapted coordinate system $(x,y)$ 
with the associated frame $\{\tilde p,\tilde q\}$
such that
$|\tilde{p}(0,0)|=1$ and $\tilde{p}(0,0)\cdot \tilde{q}(0,0)=0$.
\end{lemma}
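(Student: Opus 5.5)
The plan is to look for $(x,y)$ among \emph{conformal linear} changes of the source, $u=ax-by$, $v=bx+ay$, with $a=r\cos\theta$, $b=r\sin\theta$ and $r>0$. In complex notation this is $u+iv=(a+ib)(x+iy)$. Such a change should preserve adaptedness and act on the associated frame by a rotation through $2\theta$ together with the scaling $r^2$. The free parameters $\theta$ and $r$ are then used to make $\tilde p(0,0)\perp\tilde q(0,0)$ and $|\tilde p(0,0)|=1$.

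\emph{Step 1: the frame transformation rule.} By the chain rule and \eqref{eq:compati1},
$$
f_x=af_u+bf_v=(au-bv)p+(av+bu)q .
$$
Substituting $u=ax-by$, $v=bx+ay$ gives $au-bv=(a^2-b^2)x-2ab\,y$ and $av+bu=2ab\,x+(a^2-b^2)y$. Hence $f_x=x\tilde p+y\tilde q$ with
$$
\tilde p=(a^2-b^2)p+2ab\,q,\qquad \tilde q=-2ab\,p+(a^2-b^2)q .
$$
The same computation applied to $f_y=-bf_u+af_v$ should give $f_y=-y\tilde p+x\tilde q$. So $(x,y)$ is adapted with associated frame $\{\tilde p,\tilde q\}$, where $p,q$ are now regarded as functions of $(x,y)$. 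Since $a^2-b^2=r^2\cos2\theta$ and $2ab=r^2\sin2\theta$, the matrix taking $(p,q)$ to $(\tilde p,\tilde q)$ is $r^2$ times a rotation matrix. It is invertible, so $\tilde p,\tilde q$ remain linearly independent.

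\emph{Step 2: choosing $\theta$.} At the origin,
$$
\tilde p\cdot\tilde q=r^4\Big(\tfrac{|q|^2-|p|^2}{2}\sin4\theta+(p\cdot q)\cos4\theta\Big).
$$
The expression in parentheses has the form $A\sin4\theta+B\cos4\theta$, which vanishes for some $\theta$: take $4\theta=\pi/2$ if $A=0$, and otherwise take $\tan4\theta=-B/A$. Fix this $\theta$.

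\emph{Step 3: choosing $r$.} We have $\tilde p(0,0)=r^2\big(\cos2\theta\,p(0,0)+\sin2\theta\,q(0,0)\big)$, and this is nonzero because $p(0,0),q(0,0)$ are linearly independent. So $r>0$ can be chosen to make $|\tilde p(0,0)|=1$. Orthogonality is unaffected, since $\tilde p\cdot\tilde q$ only acquires the factor $r^4$.

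The only step needing care is the verification in Step 1 that the $f_y$ identity holds with the same $\tilde p,\tilde q$. This is a direct computation of the same kind as for $f_x$. I expect no real obstacle: the structure is that of $z\mapsto z^2$, which turns a rotation by $\theta$ in the source into a rotation by $2\theta$ of the frame.
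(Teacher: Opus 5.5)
Your proposal is correct and is essentially the paper's proof. The paper also substitutes the conformal linear change $(u,v)=r(\cos\theta\,x-\sin\theta\,y,\ \sin\theta\,x+\cos\theta\,y)$, uses that the new frame is $r^2$ times the rotation of $\{p,q\}$ by $2\theta$, chooses $\theta$ to make $\tilde p\cdot\tilde q$ vanish at the origin, and then fixes $r$ to normalize $|\tilde p(0,0)|$. Your explicit check of the $f_y$ identity and your case split on $A=0$ (that is, $|p|=|q|$ at the origin) are slightly more complete than the paper, which writes out only the case $p\cdot q\neq 0$.
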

\begin{proof}
We set
\begin{equation}\label{eq:rot}
\pmt{u\\v}=
\pmt{r\cos\theta&-r\sin\theta\\ r\sin\theta&r\cos\theta}
\pmt{x\\y}
\end{equation}
for constants $r$ and $\theta$.
Substituting \eqref{eq:compati1}, \eqref{eq:rot} and their differentials
into 
$f_x=f_uu_x+f_vv_x$ and 
$f_y=f_uu_y+f_vv_y$, we have
\begin{equation}\label{eq:xyuv}
\pmt{f_x\\f_y}
=
r^2\pmt{x&y\\-y&x}
\pmt{\tilde{p}\\ \tilde{q}},
\end{equation}
where
$
\tilde p=r^2(\cos2\theta p+\sin2\theta q)$, and
$\tilde q=r^2(-\sin2\theta p+\cos2\theta q)$.
The inner product satisfies
\begin{align*}
\dfrac{1}{r^4}\tilde p\cdot\tilde q&=
 \dfrac{1}{2}\sin4\theta (-p\cdot p+q\cdot q)
+\cos4\theta p\cdot q.
\end{align*}
If
$p\cdot q\ne0$, then we set $\theta$ satisfying
$$
\dfrac{p\cdot p-q\cdot q}{2p\cdot q}=\dfrac{\cos4\theta}{\sin4\theta}.
$$
Then we see $\tilde p\cdot \tilde q=0$.
Furthermore, 
$
\tilde p\cdot\tilde p=
r^4X
$ holds, where 
$$
X=
\cos^22\theta p\cdot p
    +2\cos2\theta\sin2\theta p\cdot q
    +\sin^22\theta q\cdot q>0.
$$
Thus setting
$
r^4=1/X
$,
we have $\tilde p\cdot\tilde p=1$.
\end{proof}

In this section, we show Theorems \ref{thm:normal1} and \ref{thm:normal2}.
\begin{proposition}\label{prop:normalnotb}
Let $f:(\R^2,0)\to(\R^3,0)$ be a frontal satisfying
$\rank df_0=0$.
We assume that there exist a coordinate system $(u,v)$ and
a frame field $\{p,q\}$ of $\nu^\perp$ 
such that \eqref{eq:compati1} holds.
Then there exist an adapted coordinate system $(x,y)$
and $T\in SO(3)$ such that
$$
T\circ f(x,y)
=
\left(
\dfrac{x^2-y^2}{2},\alpha xy,0\right)
+
(a(x,y),b(x,y),c(x,y)),
$$
where $\alpha\geq1$ and $a,b,c$ 
are rotaional-radial compatible functions
satisfying
$j^2a(0)$ $=j^2b(0)$ $=j^2c(0)=0$.
\end{proposition}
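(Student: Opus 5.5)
First apply Lemma \ref{lem:upvq} to the given adapted coordinate system. This produces an adapted coordinate system $(x,y)$ whose associated frame $\{\tilde p,\tilde q\}$ satisfies $|\tilde p(0,0)|=1$ and $\tilde p(0,0)\cdot\tilde q(0,0)=0$. Write $\alpha=|\tilde q(0,0)|>0$; it is nonzero because $\tilde p,\tilde q$ are linearly independent.

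Next choose $T\in SO(3)$ with $T\tilde p(0,0)=(1,0,0)$ and $T\tilde q(0,0)=(0,\alpha,0)$. The orthonormal vectors $\tilde p(0,0)$ and $\tilde q(0,0)/\alpha$ can be completed to a positively oriented orthonormal basis, so such a $T$ exists. Since $T$ is linear, $T\circ f$ again satisfies \eqref{eq:compati1} with frame $\{T\tilde p,T\tilde q\}$.

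The $2$-jet comes from integrating \eqref{eq:compati1} at order one. Writing $T\tilde p=(1,0,0)+O(1)$ and $T\tilde q=(0,\alpha,0)+O(1)$, we get $(T f)_x=x(1,0,0)+y(0,\alpha,0)+O(2)$ and $(Tf)_y=-y(1,0,0)+x(0,\alpha,0)+O(2)$. Hence $j^2(Tf)(0)=((x^2-y^2)/2,\alpha xy,0)$. Set $(a,b,c)=Tf-((x^2-y^2)/2,\alpha xy,0)$, which has vanishing $2$-jet.

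Each component of $Tf$ is rotational-radial compatible, because \eqref{eq:compati1} gives \eqref{eq:compati2} componentwise. The quadratic part $((x^2-y^2)/2,\alpha xy,0)$ is also compatible: for example, $h=(x^2-y^2)/2$ has $h_x=x$, $h_y=-y$, which fits \eqref{eq:compati2} with $\bar p=1$, $\bar q=0$. Compatibility is a linear condition, so $a,b,c$ are rotational-radial compatible.

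It remains to get $\alpha\ge1$, and this is the step I expect to need care. If $\alpha<1$, I would swap roles by the rotation $(x,y)\mapsto(X,Y)$ with $x=(X-Y)/\sqrt2$ and $y=(X+Y)/\sqrt2$, together with a rescaling. This is the transformation \eqref{eq:rot} with $\theta=\pi/4$, under which $\tilde p\mapsto r^2 q$ and $\tilde q\mapsto -r^2 p$, and the coordinates stay adapted by \eqref{eq:xyuv}. Choose $r$ so that $r^2\alpha=1$. The new frame at $0$ then has $|\text{new }p|=1$ and $|\text{new }q|=r^2=1/\alpha>1$, and the two are still orthogonal.

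After this swap we redo the choice of $T$ and the $2$-jet computation above, now with the new value $1/\alpha\ge1$ in place of $\alpha$. The main point to verify is that these rotation and rescaling substitutions preserve adaptedness. This follows directly from the computation leading to \eqref{eq:xyuv}, so the argument should go through.
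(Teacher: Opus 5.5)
Your proposal is correct and follows the paper's proof. You normalize the frame at the origin via Lemma \ref{lem:upvq} and a rotation $T\in SO(3)$, read off the $2$-jet, and handle $\alpha<1$ by a $\pi/4$ source rotation with rescaling by $\alpha^{-1/2}$. The paper does the same swap after normalizing the $2$-jet, compensating with a $\pi/2$ rotation about the $z$-axis where you instead redo the choice of $T$, and your linearity argument for the compatibility of $a,b,c$ makes explicit a step the paper only asserts.
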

\begin{proof}
By Lemma \ref{lem:upvq}, we may assume 
$p(0,0)=(1,0,0)$ and
$q(0,0)=(0,\alpha,0)$ $(\alpha>0)$.
Then 
$$
j^2f(0)=((x^2-y^2)/2,\alpha x y,0)
$$
holds.
Since $\alpha\geq0$, we set $\alpha=\tilde\alpha^2$.
Rotating $f$ by $\pi/2$ along the axis $(0,0,1)$,
and taking a coordinate change 
$u=(x+y)/(\sqrt{2}\tilde\alpha),
v=(-x+y)/(\sqrt{2}\tilde\alpha)$.
Then $f$ is transformed into 
$((x^2-y^2)/2,x y/\tilde\alpha^2)$. 
Moreover, one can see that
if $(u,v)$ is an adapted coordinate system, then
$(x,y)$ is also an adapted coordinate system.
This means that we may assume $\alpha\geq1$.
Thus $f$ is given by
$$
f(x,y)=\big((x^2-y^2)/2+\tilde{a}(x,y),\alpha x y+\tilde{b}(x,y),\tilde{c}(x,y)\big),\ 
j^2\tilde{a}(0)=j^2\tilde{b}(0)=j^2\tilde{c}(0)=0,
$$
and $(x,y)$ is an adapted coordinate system, where
$\tilde{a},\tilde{b},\tilde{c}$ are 
rotational-radial compatible with respect to $(x,y)$.
This proves the assertion.
\end{proof}
\begin{proof}[Proof of Theorem {\rm \ref{thm:normal1}}]
By Proposition \ref{prop:normalnotb},
we may assume $f(u,v)$ is written by
\begin{equation}\label{eq:normal01}
f(u,v)
=
\left(
\dfrac{u^2-v^2}{2},\alpha uv,0\right)
+
(a(u,v),b(u,v),c(u,v))
\end{equation}
satisfying $j^2a(0,0)=j^2b(0,0)=j^2c(0,0)=0$ and
$a,b,c$ are 
rotational-radial compatible with respect to $(u,v)$.
We write 
$a(u,v)=-a_{12}u^3/3!+a_{21}u^2v/2+a_{12}uv^2/2-a_{21}v^3/3!+O(3)$
(see \eqref{eq:rotradfunc} for the rule of coefficients),
where $O(n)$ stands for the terms whose degrees are greater
than $n$.
We set
$x=u + (a_{12}u^{2} - 2a_{21}uv - a_{12}v^{2})/6$,
$y=v + (a_{21}u^{2} + 2a_{12}uv - a_{21}v^{2})/6$.
Then we see $(x,y)$ is adapted, and
$f(x,y)$ is written by
\begin{equation}\label{eq:normal02}
f(x,y)
=
\left(
\dfrac{x^2-y^2}{2},\alpha xy,0\right)
+
(\tilde{a}(x,y),\tilde{b}(x,y),\tilde{c}(x,y))
\end{equation}
satisfying $j^3\tilde{a}(0,0)=0$, $j^2\tilde{b}(0,0)=j^2\tilde{c}(0,0)=0$ and
$\tilde{a},\tilde{b},\tilde{c}$ are 
rotational-radial compatible with respect to $(x,y)$.
If 
$c_{21}\leq0$ and $c_{12}\leq0$, then taking $(u,v)\mapsto(-u,-v)$,
if 
$c_{21}\leq0$ and $c_{12}\geq0$, then taking $(u,v)\mapsto(-v,u)$
if 
$c_{21}\geq0$ and $c_{12}\leq0$, then taking $(u,v)\mapsto(v,-u)$,
we see $c_{21}\geq0$ and $c_{12}\geq0$ can be satisfied.
Next, we assume $f(u,v)$ is written as in \eqref{eq:normal01}
satisfying $j^3a(0,0)=0$, $j^2b(0,0)=j^2c(0,0)=0$ and
$a,b,c$ are 
rotational-radial compatible with respect to $(u,v)$.
Write 
$a=\sum_{i+j=4}a_{ij}u^iv^j/(i!j!)$ and
$b=\sum_{i+j=3}^4b_{ij}u^iv^j/(i!j!)$
satisfying \eqref{eq:rotradfunc}.
We set
$$
\begin{array}{rl}
x&=u+
\dfrac{1}{24\alpha}\big(-a_{40}\alpha u^{3}-3b_{22}u^{2}v
-3a_{22}\alpha uv^{2}+(4a_{31}\alpha-3b_{22})v^{3}\big),\\[3mm]
y&=v+
\dfrac{1}{24\alpha}\big((4a_{31}\alpha-3b_{22})u^{3}
+3a_{22}\alpha u^{2}v-3b_{22}uv^{2}+a_{40}\alpha v^{3}\big).
\end{array}
$$
Then we see $(x,y)$ is adapted, and
$f(x,y)$ is written by \eqref{eq:normal02}
satisfying $j^4\tilde{a}(0,0)=0$, $j^2\tilde{b}(0,0)=j^2\tilde{c}(0,0)=0$ and
$\tilde{a},\tilde{b},\tilde{c}$ are 
rotational-radial compatible with respect to $(x,y)$.
Moreover, $\tilde{b}_{22}=0$ holds, where $\tilde{b}=\sum_{i+j=2}^4
\tilde{b}_{ij}x^iy^j/(i!j!)$.
Furthr, we assume $f(u,v)$ is written as in \eqref{eq:normal01}
satisfying $j^4a(0,0)=0$, $j^2b(0,0)=j^2c(0,0)=0$ and
$a,b,c$ are 
rotational-radial compatible with respect to $(u,v)$.
Moreover, $b_{22}=0$ holds, where $b=\sum_{i+j=3}^4
b_{ij}u^iu^j/(i!j!)$.
Write $a=\sum_{i+j=5}
a_{ij}u^iu^j/(i!j!)$ satisfying \eqref{eq:rotradfunc}.
We set
\begin{align*}
&C=\\
&\pmt{
0&                                                                 -30\alpha b_{22} \\
-30\alpha b_{22} &                                                 0 \\
0 &                                                                -30\alpha b_{22} \\
-30\alpha b_{22} &                                                 0 \\
 -6a_{14}\alpha^2-4a_{32}\alpha^2 &                                -6a_{23}\alpha^2+ 6a_{41}\alpha^2+15b_{21}b_{22}-10\alpha b_{32} \\
 -6a_{23}\alpha^2-4a_{41}\alpha^2+15b_{21}b_{22}-10\alpha b_{32} & -4a_{14}\alpha^2+14a_{32}\alpha^2+15b_{12}b_{22}-10\alpha b_{23} \\
 -4a_{14}\alpha^2-6a_{32}\alpha^2+15b_{12}b_{22}-10\alpha b_{23} &  6a_{23}\alpha^2+ 4a_{41}\alpha^2+15b_{21}b_{22}-10\alpha b_{32} \\
-14a_{23}\alpha^2+4a_{41}\alpha^2+15b_{21}b_{22}-10\alpha b_{32} &  4a_{14}\alpha^2+ 6a_{32}\alpha^2+15b_{12}b_{22}-10\alpha b_{23} \\
 -6a_{14}\alpha^2+6a_{32}\alpha^2+15b_{12}b_{22}-10\alpha b_{23} &  4a_{23}\alpha^2+ 6a_{41}\alpha^2
}
\end{align*}
and set
$$
(x,y)=(u,v)+\dfrac{1}{240\alpha^2}{}^tC\,
\,\trans{{}\big(
u^3,\ 
u^2v,\ 
uv^2,\ 
v^3,\ 
u^4,\ 
u^3v,\ 
u^2v^2,\ 
uv^3,\ 
v^4
\big)},
$$
where $\trans(~)$ stands for the matrix transposition.
Then we see $(x,y)$ is adapted, and
$f(x,y)$ is written by \eqref{eq:normal02}
satisfying $j^5\tilde{a}(0,0)=0$, $j^2\tilde{b}(0,0)=j^2\tilde{c}(0,0)=0$ and
$\tilde{a},\tilde{b},\tilde{c}$ are 
rotational-radial compatible with respect to $(x,y)$.
Moreover, $\tilde{b}_{22}=\tilde{b}_{32}=\tilde{b}_{23}=0$ holds, where $\tilde{b}=\sum_{i+j=3}^5
\tilde{b}_{ij}x^iy^j/(i!j!)$.
This proves the assertion.
\end{proof}
\begin{proof}[Proof of Theorem {\rm \ref{thm:normal2}}]
Since the subspace $(0,0,1)^\perp$ is independent, 
it holds that
$$T=A_\theta=
\pmt{
\cos\theta&-\sin\theta&0\\
\sin\theta& \cos\theta&0\\
0&0&1},\quad\text{or}\quad
T=A'_\theta=\pmt{
\cos\theta& \sin\theta&0\\
\sin\theta&-\cos\theta&0\\
0&0&-1}.
$$
We set
$\phi(u,v)=(\phi_{110}u+\phi_{101}v, 
\phi_{210}u+\phi_{201}v)$ and $(x,y)=\phi(u,v)$,
where $\phi_{110}$, $\phi_{101}$, $\phi_{210}$, $\phi_{201}\in\R$ and
$\phi_{110}\phi_{201}-\phi_{101}\phi_{210}>0$.
By assumption, $T\circ f_2\circ \phi^{-1}(x,y)=\tilde{f}_2(x,y)$
holds.
Conparing the coefficients of the first and the second components of
$T\circ f_2\circ \phi^{-1}(x,y)$ and $\tilde{f}_2(x,y)$,
we have $T=A_\theta$ and
$(a_{11},a_{12},a_{21},a_{22},\theta)$ $=(1,0,0,1,2\pi)$,
$(a_{11},a_{12},a_{21},a_{22},\theta)$ $=(-1,0,0,-1,2\pi)$,
$(a_{11},a_{12},a_{21},a_{22},\theta)$ $=(0,-1,1,0,\pi)$ or
$(a_{11},a_{12},$ $a_{21},a_{22},\theta)$ $=(0,1,$ $-1,0,\pi)$.
When by the assumption $c_{21}>0$ and $c_{12}\geq0$, 
comparing the third components of $f_{3}(u,v)$ and 
$A\circ f_3\circ s^{-1}(u,v)$,
we see that only the first case is appropriate.
We set
$$
\phi(u,v)=\left(
u+\sum_{i+j=2}^4\dfrac{\phi_{1ij}}{i!j!}u^iv^j,\ 
v+\sum_{i+j=2}^4\dfrac{\phi_{2ij}}{i!j!}u^iv^j\right).
$$
Conparing the coefficients of the first and the second components of
$T\circ f_2\circ \phi^{-1}(x,y)$ and $\tilde{f}_2(x,y)$,
with $T$ is the identity,
we have $\phi_{1ij}=\phi_{2ij}=0$ for any $i,j\in\{0,\ldots,4\}$.
This shows the assertion.
\end{proof}
\begin{remark}
For $D_4^+$-singularity, we set
\begin{align*}
f_{{\rm n}2}=&
\dfrac{1}{2}\left(u^2 - v^2,\alpha(u^2 + v^2),0\right),\\
f_{{\rm n}3}=&
\dfrac{1}{6}\Big(0, b_{30} u^3 + b_{03} v^3,c_{30} u^3 + c_{03} v^3\Big),\\
f_{{\rm n}4}=&
\dfrac{1}{24}\Big(0, b_{40} u^4 + b_{40} v^4,
c_{40} u^4 + 6 c_{22} u^2 v^2 + c_{40} v^4\Big),\\
f_{{\rm n}5}=&
\dfrac{1}{120}\Big(0, (b_{50} u^5   + b_{05} v^5,
c_{50} u^5 + 12 c_{32} u^3v^2 + 12 c_{23} u^2 v^3 
+ c_{05}v^5\Big)
\end{align*}
and
$$
f_k=\sum_{j=2}^if_{{\rm n}j}
\quad(k=2,3,4,5).
$$
If $\alpha>0$, $c_{30}>0$, $c_{03}>0$, then
$f_k$ $(k=3,4,5)$ is another normal form for $D_4^+$-singularity given in
\cite[Theorem 2.1]{sajid4}, with
the uniqueness in the sense of Theorem \ref{thm:normal2}.
This can be shown by the same method as the proof of 
Theorems \ref{thm:normal1} and \ref{thm:normal2}.
\end{remark}

\section{Geometry on $D_4^-$-singularity}
In this section, we study geometry on $D_4^-$-singularity
using the normal form $f_3$ in Theorem \ref{thm:normal1}.

\subsection{Curvatures}
The set $S(f)$ of singular points consists of only
the origin, and the Gaussian curvature $K$ and the
mean curvature can be expanded as
\begin{align}
\label{eq:d4k}
K&=\dfrac{1}{u^2+v^2}
\Big(-\dfrac{c_{30}^2+c_{03}^2}{4\alpha^2}+O(1)\Big),\\
\label{eq:d4h}
H&=\dfrac{1}{u^2+v^2}\Big(\dfrac{\alpha^2-1}{2\alpha}
(c_{30}u+c_{03}v)+O(2)\Big).
\end{align}
It is known that the minimal surface $k$ written by the
Weierstrass representation formula
$$
k={\rm Re}\int\pmt{
(1-g^2)h\\
i(1+g^2)h\\
2gh}dz
$$
have a $D_4^-$-singular point if and only if
$g_zh_z\ne0$.
The formula \eqref{eq:d4h} implies that $\alpha=1$ is a
necessary condition
for the mean curvature does not diverge,
the invariant $\alpha$ is $1$ for $D_4^-$-singularities of
minimal surfaces.
On the other hand, by \eqref{eq:d4k},
the Gaussian curvature $K$ diverges to $-\infty$ at
a $D_4^-$-singularity.
Thus there exists asymptotic curves around
a $D_4^-$-singularity.
Let $f$ be the normal form $f_3$ in Theorem \ref{thm:normal1}.
Let $\nu$ be a unit normal vector field and 
let
$$L=f_{uu}\cdot\nu,\quad
M=f_{uv}\cdot\nu,\quad
N=f_{vv}\cdot\nu$$
be the coefficients of the second fundamental form.
We set a symmetric $(0,2)$ tensor $w_{as}$ by
$$
w_{as}=
L\,du^2+2M\,dudv+N\,dv^2.
$$
A vector field $X\in {\frak X}(\R^2,0)$ is 
{\it a solution of $w_{as}=0$} if
$w_{as}(X,X)=0$ holds and a {\it configration defined by
$w_{as}=0$} is integral curves of solutions of
$w_{as}=0$. We have the following theorem.
\begin{theorem}
Let $f:(\R^2,0)\to (\R^3,0)$ be a frontal with $df_0=0$,
which has an adapted coordinate system.
We assume that $\alpha\ne0$, and $f$ is a front.
Then there exists a diffeomorphism-germ $\Phi$ 
such that the one-jet of $w_{as}$ 
is
$$
j^1(\Phi^* w_{as})
=-v\,du^2-2u\, dudv+v\,dv^2
$$
upto a non-zero multiplication.
\end{theorem}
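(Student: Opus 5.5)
By Theorem \ref{thm:normal1}, I would first put $f$ in the form $f_3$ plus rotational-radial compatible terms $(a,b,c)$ with vanishing $3$-jets. Only low-order data should enter the $1$-jet of $w_{as}$, so everything is then determined by $\alpha$, $b_{12},b_{21},c_{12},c_{21}$.

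Rather than the unit normal, I would use $\tilde\nu=p\times q$. This is smooth and nonvanishing, and it changes $w_{as}$ only by a positive factor, which is allowed since the claim is up to a non-zero multiplication. With the associated frame $f_u=up+vq$, $f_v=-vp+uq$, we have $p\cdot\tilde\nu=q\cdot\tilde\nu=0$. Differentiating gives
\[
L=f_{uu}\cdot\tilde\nu=(up_u+vq_u)\cdot\tilde\nu,
\]
since the term $p\cdot\tilde\nu$ vanishes. In the same way,
\[
M=(up_v+vq_v)\cdot\tilde\nu=(-vp_u+uq_u)\cdot\tilde\nu,
\qquad
N=(-vp_v+uq_v)\cdot\tilde\nu.
\]
Thus $L,M,N$ all vanish at $0$, and their $1$-jets are linear in $u,v$ with coefficients
\[
P_1=\det(p_u,p,q),\quad P_2=\det(p_v,p,q),\quad Q_1=\det(q_u,p,q),\quad Q_2=\det(q_v,p,q)
\]
evaluated at $0$. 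Explicitly, modulo $O(2)$,
\[
j^1w_{as}=(uP_1+vQ_1)\,du^2+2(uP_2+vQ_2)\,du\,dv+(-vP_2+uQ_2)\,dv^2 .
\]
The two expressions for $M$ must agree at first order, which is the integrability condition $p_v v+\dots$. This forces $P_2=-Q_1$ and $Q_2=P_1$, or the corresponding relations. I would verify this directly from $f_{uv}=f_{vu}$ using the associated frame. Carrying it out, the linear form becomes
\[
j^1w_{as}=(aU+bV)\,du^2+2(bU-aV)\,du\,dv+(-aU-bV)\,dv^2,
\]
roughly a traceless cubic structure, with $(a,b)$ built from $P_1$ and $Q_1$. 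By Lemma \ref{lem:basisplfr}, the front condition says exactly that the matrix $\begin{pmatrix}P_1&P_2\\Q_1&Q_2\end{pmatrix}$ is nonsingular. So $(a,b)\ne(0,0)$.

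The key structural point is that this $1$-jet is the polarization of the harmonic cubic $\operatorname{Re}\big((a-ib)z^3\big)$ up to sign conventions. Such a cubic corresponds to the $D_4^-$ binary differential equation, with three asymptotic directions at the "star". I would then apply a rotation $\Phi(u,v)=R_\theta(u,v)$. A rotation acts on $1$-jets of this form by multiplying $a-ib$ by $e^{3i\theta}$, and can be composed with a scaling. Choosing $\theta$ so that the coefficient becomes real yields $-v\,du^2-2u\,du\,dv+v\,dv^2$ after rescaling. The nonvanishing of $(a,b)$ is exactly what is needed to reach this.

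The main obstacle is the middle step: showing rigorously, from \eqref{eq:compati1} alone, that the first-order coefficients of $L,M,N$ have this harmonic structure. The quantities $p_u,p_v,q_u,q_v$ are not independent, since the compatibility $f_{uv}=f_{vu}$ gives $p+up_v+vq_v=-p_u v\ldots$. I would first try expanding in the normal form $f_3$ directly, where $p$ and $q$ are explicit. There, a direct computation gives $\tilde\nu(0)=(0,0,\alpha)$, and $L,M,N$ come from the third components, yielding coefficients $\propto(c_{12},c_{21})$. The front condition (Lemma \ref{lem:basisplfr}) then becomes $(c_{12},c_{21})\neq(0,0)$, and the rotation argument finishes the proof.
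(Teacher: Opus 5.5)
Your argument is correct, and it reaches the key step by a different route from the paper's.

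\textbf{How the paper argues.} The paper first passes to the normal form of Theorem~\ref{thm:normal1}. There it computes $j^1w_{as}$ as a multiple of $(-c_{12}u+c_{21}v)\,du^2+2(c_{21}u+c_{12}v)\,du\,dv+(c_{12}u-c_{21}v)\,dv^2$ and asserts that being a front is equivalent to $(c_{21},c_{12})\ne(0,0)$. It then normalizes by a case split ($c_{12}=0$, $c_{21}=0$, or neither). In the last case it uses the linear change $x=-kc_{12}u+c_{21}v$, $y=-c_{21}u-kc_{12}v$, with $k$ a real root of the cubic $w_1$ and a resultant computation excluding $w_2(k)=0$.

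\textbf{How your argument differs.} You stay in an arbitrary adapted coordinate system and use $\tilde\nu=p\times q$. Because $(up_v+vq_v)\cdot\tilde\nu$ and $(-vp_u+uq_u)\cdot\tilde\nu$ are identically equal, their linear parts coincide. This yields the traceless structure at once: $j^1w_{as}=\mathrm{Re}\,((a-ib)z\,dz^2)$ with $z=u+iv$, $a=P_1$, $b=Q_1$. Lemma~\ref{lem:basisplfr} then turns the front condition into $\det\begin{pmatrix}a&b\\ b&-a\end{pmatrix}=-(a^2+b^2)\neq0$.

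What this buys you:
\begin{itemize}
\item You need no normal form.
\item You actually prove the equivalence between the front condition and nonvanishing of the cubic coefficient, which the paper only asserts.
\item The rotation $z\mapsto e^{i\theta}z$, which multiplies the coefficient by $e^{3i\theta}$, replaces the case analysis, the cubic in $k$ and the resultant. The paper's linear change is itself a rotation composed with a scaling, so the two end-games agree in substance.
\end{itemize}
The paper's route in turn expresses everything through the invariants $c_{12},c_{21}$ used elsewhere in the paper. In normal form your pair is $(a,b)=\tfrac{\alpha}{2}(-c_{12},c_{21})$, since $p\times q=(0,0,\alpha)$ at $0$.

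\textbf{Slips to correct.}
\begin{itemize}
\item The relations you wrote, $P_2=-Q_1$ and $Q_2=P_1$, have the wrong signs. Comparing $uP_2+vQ_2=uQ_1-vP_1$ gives $P_2=Q_1$ and $Q_2=-P_1$. These are what your displayed traceless form actually uses.
\item Consequently, what you call the main obstacle is already settled by your own computation, and the fallback to the normal form is unnecessary.
\item The model $-v\,du^2-2u\,du\,dv+v\,dv^2$ equals $\mathrm{Re}(iz\,dz^2)$. So you must choose $\theta$ with $e^{3i\theta}(a-ib)$ purely imaginary, not real. A real coefficient gives $u\,du^2-2v\,du\,dv-u\,dv^2$; an extra rotation by $\pi/6$ repairs this.
\item Replacing $\nu$ by $p\times q$ alters $j^1w_{as}$ only by the constant factor at $0$, because $L,M,N$ vanish at $0$. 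This is worth stating explicitly.
\end{itemize}
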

\begin{proof}
By the assumption, we write $f$ as the form \eqref{eq:normald4m}.
There is no assumption $\alpha\ne1$, the coefficients
$(c_{21},c_{12})$ are not uniquely determined, however, 
the condition
$(c_{21},c_{12})\ne(0,0)$ is determined without the assumption
$\alpha\ne1$.
Moreover, $(c_{21},c_{12})\ne(0,0)$ is equivalent to 
$f$ is a front, we may assume $(c_{21},c_{12})\ne(0,0)$.
By a calculation, $j^1(w_{as})$ is a non-zero multiplication of
$$
(-c_{12}u+c_{21}v)\,du^2+2(c_{21}u+c_{12}v)\,dudv+(c_{12}u-c_{21}v)\,dv^2.
$$
If $c_{12}=0$, then $c_{21}\ne0$ and the assertion holds.
So we assume $c_{12}\ne0$.
If $c_{21}=0$, then the assertion holds by a change $(u,v)\mapsto(-v,u)$.
So we assume $c_{21}c_{12}\ne0$.
We consider a coordinate change
$$
x=-kc_{12}u+ c_{21}v,\quad
y=- c_{21}u-kc_{12}v,
$$
where $k$ is undetermined yet.
Then by a direct calculation, we see
$j^1(w_{as})$ written in the coordinate system $(x,y)$ 
is a non-zero multiplication of
\begin{align*}
&  ( w_{1}x+w_{2}y)\,dx^2
+2( w_{2}x-w_{1}y)\,dxdy
+ (-w_{1}x-w_{2}y)\,dy^2,\\
w_1=&k^3 c_{12}^4+3k^2 c_{12}^2 c_{21}^2-3k c_{12}^2 c_{21}^2-c_{21}^4,\\
w_2=&-c_{12}c_{21}(k^3 c_{12}^2-3k^2 c_{12}^2-3k c_{21}^2+c_{21}^2).
\end{align*}
Since $c_{12}\ne0$, there is a solution $k$ to $w_1=0$, and
since the resultant of $w_1$ and $w_2$ with respect to $k$ is 
$-64\,c_{12}^9 c_{21}^9 (c_{12}^2+c_{21}^2)^3$,
this solution $k$ is not a solution of $w_2=0$.
Choosing $k$ as the above, we have the assertion.
\end{proof}
The configration of
$\omega_0=-v\,du^2-2u\, dudv+v\,dv^2=0$
is given by \cite{BruceFidal} and 
its figure in the source space is drawn in Figure \ref{fig:BruceFidal}
left, and an example of its figure in a $D_4^-$-singularity
is drawn in Figure \ref{fig:BruceFidal}
right.
\begin{figure}[ht]
\begin{center}
\includegraphics[width=.3\linewidth]{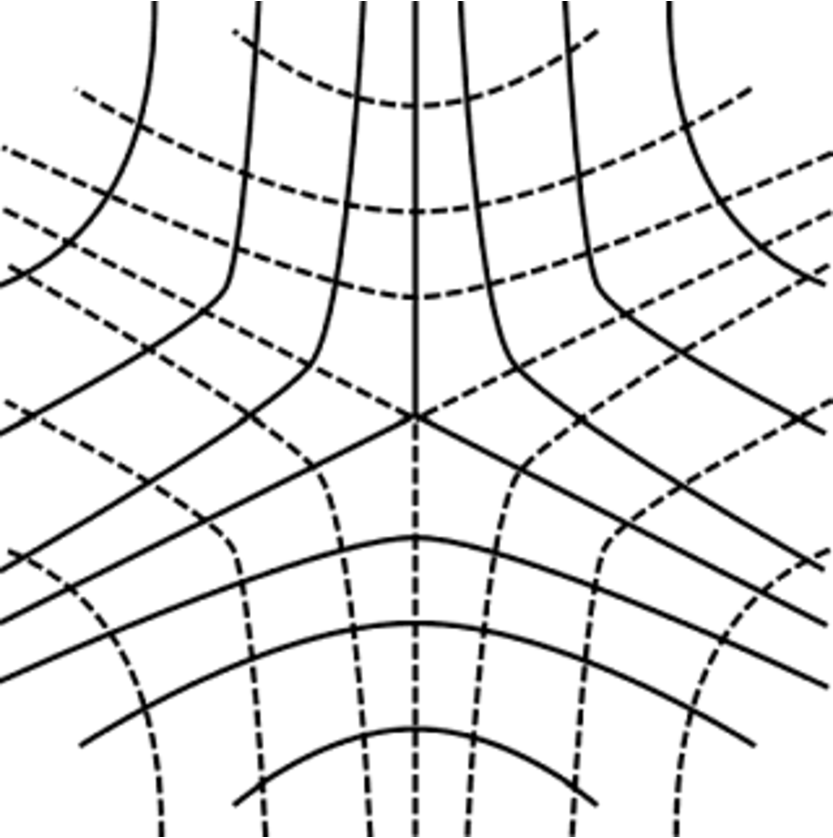}
\hspace{1cm}
\includegraphics[width=.4\linewidth]{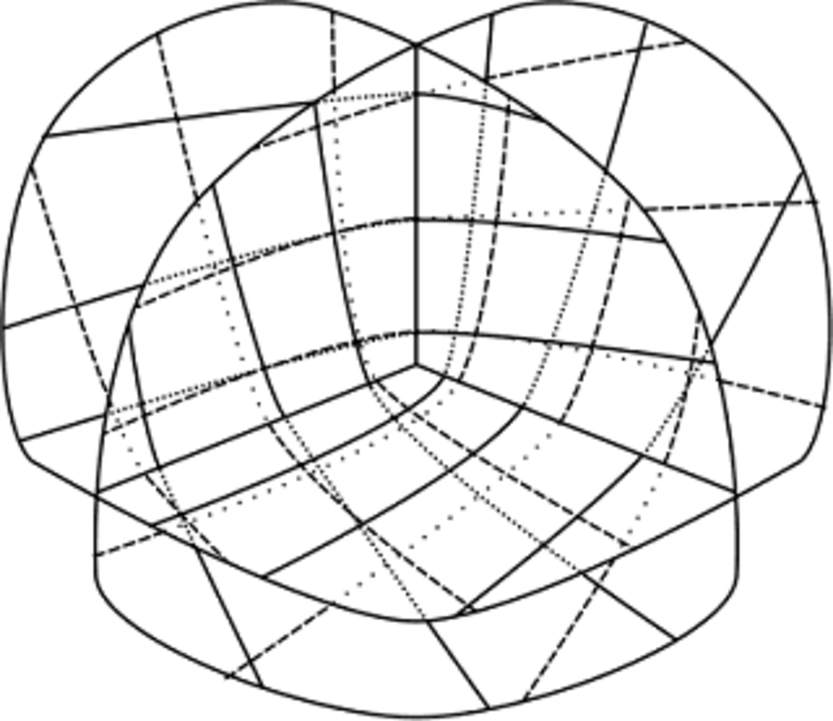}
\end{center}
\caption{Configration of solutions of $\omega_0=0$.}
\label{fig:BruceFidal}
\end{figure}

\subsection{Intrinsity of $\alpha$}
\begin{lemma}\label{lem:upvq2}
Let $f:(\R^2,0)\to(\R^3,0)$ be a frontal satisfying
$\rank df_0=0$.
{\rm (1)} 
Let $(u,v)$ and $(x,y)$ be two adapted coordinate systems
with the associating frames $\{p,q\}$ and $\{\tilde p,\tilde q\}$
respectively.
If 
$|p|=|\tilde{p}|=1$ and
$p\cdot q=\tilde{p}\cdot\tilde{q}=0$ hold at $(0,0)$,
then
it holds that $|q(0,0)|=|\tilde{q}(0,0)|$.
{\rm (2)} 
Let $(u,v)$ and $(x,y)$ be two coordinate systems,
and let $p_0,q_0,\tilde{p}_0,\tilde{q}_0$ be constant vectors
satisfying
\begin{equation}\label{eq:upvqxpxq}
\left\{
\begin{array}{ll}
j^1f_u(0)&= up_0+vq_0,\\
j^1f_v(0)&=-vp_0+uq_0
\end{array}
\right.
\quad
\left\{
\begin{array}{ll}
j^1f_x(0)&= x\tilde{p}_0+y\tilde{q}_0,\\
j^1f_y(0)&=-y\tilde{p}_0+x\tilde{q}_0
\end{array}
\right.
\end{equation}
and 
$|p_0|=|\tilde{p}_0|=1$ and
$p_0\cdot q_0=\tilde{p}_0\cdot\tilde{q}_0=0$ hold.
Then $|q_0|=|\tilde{q}_0|$.
\end{lemma}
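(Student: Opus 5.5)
The plan is to reduce everything to the quadratic part of $f$ and use complex notation $z=u+iv$, $\zeta=x+iy$. First I would show that (1) follows from (2). If $(u,v)$ is adapted with associated frame $\{p,q\}$, then \eqref{eq:compati1} gives $j^1f_u(0)=up(0)+vq(0)$ and $j^1f_v(0)=-vp(0)+uq(0)$. So $p_0=p(0,0)$ and $q_0=q(0,0)$ satisfy \eqref{eq:upvqxpxq}, and the same holds for $\tilde p,\tilde q$. Hence it suffices to prove (2).

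For (2), integrating \eqref{eq:upvqxpxq} gives
$$
j^2f(0)=\tfrac12\big(\operatorname{Re}(z^2)\,p_0+\operatorname{Im}(z^2)\,q_0\big)=L(z^2),
$$
where $L:\boldsymbol{C}\to\R^3$ is the real-linear map $L(w)=\frac12(\operatorname{Re}w\,p_0+\operatorname{Im}w\,q_0)$. Likewise $j^2f(0)=\tilde L(\zeta^2)$, with $\tilde L$ built from $\tilde p_0,\tilde q_0$. Since $df_0=0$, only the linear part $z=A\zeta$ of the coordinate change affects the $2$-jet. The components of $L(A\zeta)^2$ must span the same space of quadratic forms as $\operatorname{Re}\zeta^2$ and $\operatorname{Im}\zeta^2$, namely the harmonic quadratics. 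Because $p_0\perp q_0$ are independent, this forces $(A\zeta)^2=c\zeta^2$ or $(A\zeta)^2=c\bar\zeta^2$ with $c\neq0$. So $A$ is conformal or anticonformal, and $\tilde L=L\circ m_c$ or $\tilde L=L\circ m_c\circ(\text{conjugation})$, where $m_c$ denotes multiplication by $c=|c|e^{i\varphi}$.

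Next I would compare singular values. By the normalization $|p_0|=1$ and $p_0\cdot q_0=0$, the map $L$ sends the orthonormal basis $\{1,i\}$ to orthogonal vectors of lengths $\frac12$ and $\frac12|q_0|$. So its singular values are $\frac12$ and $\frac12|q_0|$, and the same description holds for $\tilde L$ with $|\tilde q_0|$. Since conjugation is an isometry, $\tilde L=|c|\,L\circ R_\varphi$ up to a reflection, where $R_\varphi$ is rotation by $\varphi$. The condition $\tilde p_0\cdot\tilde q_0=0$ says that $L R_\varphi$ maps $1$ and $i$ to orthogonal vectors. If $|q_0|\neq1$, this forces $\varphi\in\frac{\pi}{2}\boldsymbol{Z}$; if $|q_0|=1$, then $L$ is conformal and $|\tilde q_0|=1$ follows at once from $|\tilde p_0|=1$. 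Finally $|\tilde p_0|=1$ fixes $|c|$. When $\varphi\in\pi\boldsymbol{Z}$ this gives $|c|=1$ and hence $|\tilde q_0|=|q_0|$. The lemma also allows coordinate changes that are not orientation preserving, and the reflection is handled the same way.

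The main obstacle is the quarter-turn case $\varphi\in\frac{\pi}{2}+\pi\boldsymbol{Z}$. There the roles of $p_0$ and $q_0$ are exchanged, and the computation gives $|c|\,|q_0|=1$ and $|\tilde q_0|=1/|q_0|$. This is exactly the rescaling used in the proof of Proposition \ref{prop:normalnotb}, which turns $\alpha$ into $1/\alpha$. So the first thing I will do is check this case carefully rather than assume it is excluded. My expectation is that it can only be reconciled with the asserted equality $|q_0|=|\tilde q_0|$ by reading $\alpha=|q_0|$ together with the convention $\alpha\ge1$ of Theorem \ref{thm:normal1}. Under that convention, if both $|q_0|\ge1$ and $|\tilde q_0|\ge1$, then $|\tilde q_0|=1/|q_0|$ forces $|q_0|=|\tilde q_0|=1$. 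If the statement is meant without that convention, this step will fail, and I will record that the quarter-turn case is the only obstruction.
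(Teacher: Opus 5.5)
Your analysis is right, and the quarter-turn case cannot be argued away: it genuinely occurs. So the lemma as literally stated is false, and you should say so outright instead of hedging.

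\textbf{A counterexample.} Take the frontal $f(u,v)=\bigl(\tfrac{u^2-v^2}{2},\,2uv,\,0\bigr)$ with $\nu=(0,0,1)$. Here $(u,v)$ is adapted with $p=(1,0,0)$, $q=(0,2,0)$. Substitute the orientation-preserving linear change $u=(x-y)/2$, $v=(x+y)/2$. This gives $f=\bigl(-\tfrac{xy}{2},\,\tfrac{x^2-y^2}{2},\,0\bigr)$, so $f_x=x\tilde p+y\tilde q$ and $f_y=-y\tilde p+x\tilde q$ with $\tilde p=(0,1,0)$, $\tilde q=(-\tfrac12,0,0)$. Both frames satisfy the normalization in (1) and (2), yet $|q|=2$ and $|\tilde q|=\tfrac12$.

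The same happens for the paper's main objects. Use the third component $u^2v-v^3/3$ instead, so that $p=(1,0,v)$, $q=(0,2,u)$. This gives a $D_4^-$ front: the determinant in Lemma~\ref{lem:basisplfr} equals $-4$. Its frames at the origin are unchanged, so the lemma fails there too.

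\textbf{Where the paper's proof goes wrong.} The paper escapes this only through its first step, ``by a rotation, we can assume $p_0=\tilde p_0$''. Granting that, its chain-rule computation of $f_{uu},f_{uv},f_{vv}$ at the origin forces the linear part of the coordinate change to be $\mathrm{diag}(\pm1,\pm1)$, hence $q_0=\pm\tilde q_0$. This covers exactly your case $\varphi\in\pi\boldsymbol{Z}$.

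The reduction is not available in general. A rotation of $\R^3$ moves $p_0$ and $\tilde p_0$ simultaneously. Your own computation shows that for two normalized systems with $|q_0|\neq1$, either $\tilde p_0=\pm p_0$ or $\tilde p_0=\pm q_0/|q_0|$. The second alternative is precisely the $\alpha\mapsto1/\alpha$ move the paper itself makes in the proof of Proposition~\ref{prop:normalnotb}. So the paper's proof has a genuine gap at that step, and no argument can close it.

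\textbf{What your argument proves.} Your route uses the $2$-jet written as $L(z^2)$, conformality or anticonformality of the linear part, and singular values. It handles an arbitrary linear part, which is exactly what exposes the missing case. It is a complete proof of the corrected statement: $|\tilde q_0|\in\{|q_0|,|q_0|^{-1}\}$. Hence the well-defined invariant is $\max\{|q_0|,|q_0|^{-1}\}$. The equality $|q_0|=|\tilde q_0|$ holds once both systems are normalized by $|q_0|\ge1$ and $|\tilde q_0|\ge1$, as in Theorem~\ref{thm:normal1}. That is the form in which the lemma should be stated, and used when the paper sets $\alpha=|q|$.

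Two small points are worth writing out:
\begin{itemize}
\item The conformality step: write the linear part as $\zeta\mapsto a\zeta+b\bar\zeta$. The term $2ab|\zeta|^2$ in its square must vanish, so $ab=0$.
\item In (2), the degenerate case $q_0=0$: comparing the images of the $2$-jets forces $\tilde q_0=0$.
\end{itemize}
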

\begin{proof}
It is enough to show (2).
By definition, it holds that 
$f_{uu}=-f_{vv}=p_0$,
$f_{uv}=q_0$,
$f_{xx}=-f_{yy}=\tilde{p}_0$,
$f_{xy}=\tilde{q}_0$ at $(0,0)$.
By a rotation, we can assume $p_0=\tilde{p}_0$.
Let $x=x(u,v)$, $y=y(u,v)$ be a coordinate change between $(u,v)$ and $(x,y)$.
Then we obtain
\begin{align*}
f_{uu}&=f_{xx}(x_u)^2+2f_{xy}x_uy_u+f_{yy}(y_u)^2
=f_{xx}((x_u)^2-(y_u)^2)+2f_{xy}x_uy_u\\
f_{uv}&=f_{xx}x_uy_u+f_{xy}(x_uy_v+x_vy_u)+f_{yy}y_uy_v
=f_{xx}(x_uy_u-y_uy_v)+f_{xy}(x_uy_v+x_vy_u),\\
f_{vv}&=f_{xx}(x_v)^2+2f_{xy}x_vy_v+f_{yy}(y_v)^2
=f_{xx}((x_v)^2-(y_v)^2)+2f_{xy}x_vy_v
\end{align*}
at $(0,0)$.
By $p_0=\tilde{p}_0$, we have $x_uy_u=x_vy_v=0$
and $(x_u)^2-(y_u)^2=-((x_v)^2-(y_v)^2)=1$.
If $x_u=0$, then $-(y_u)^2=1$, and this is a contradiction.
So, $y_u=x_v=0$, $x_u=\pm1$ and $y_v=\pm1$.
Thus $f_{uv}=\pm f_{xy}$ at $(0,0)$ and we have the assertion.
\end{proof}
By Lemma \ref{lem:upvq2},
the length $|q|$ does not depend on the choice of the
coordinate system satisfying the assumption of Lemma \ref{lem:upvq2} (2).
Thus it is a geometric invariant,
we set $\alpha=|q|$.
Let $(u,v)$ and $(x,y)$ be two coordinate systems,
and let $p_0,q_0,\tilde{p}_0,\tilde{q}_0$ be constant vectors
satisfying \eqref{eq:upvqxpxq}
and 
$|\tilde{p}_0|=1$ and $\tilde{p}_0\cdot\tilde{q}_0=0$ hold.
We assume $(u,v)$ and $(x,y)$ relate with
\eqref{eq:xyuv}.
By a direct calculation, we see
\allowdisplaybreaks
\begin{align}\label{eq:alphafirst}
\alpha&=(\tilde q\cdot\tilde q)^{1/2}\Big|_{u=v=0}\\
&=
r^2(  \sin^22\theta p\cdot p
    -2\cos2\theta\sin2\theta p\cdot q
    +\cos^22\theta q\cdot q)^{1/2}\Big|_{u=v=0}\nonumber\\
&=
\left(\dfrac{  \sin^22\theta p\cdot p
    -2\cos2\theta\sin2\theta p\cdot q
    +\cos^22\theta q\cdot q}
{\cos^22\theta p\cdot p
    +2\cos2\theta\sin2\theta p\cdot q
    +\sin^22\theta q\cdot q}\right)^{1/2}\Bigg|_{u=v=0}\nonumber\\
&=
\left(
\dfrac{(1-\cos 4\theta)p\cdot p-2\sin4\theta p\cdot q+
(1+\cos 4\theta)q\cdot q}
{(1+\cos 4\theta)p\cdot p+2\sin4\theta p\cdot q+
(1-\cos 4\theta)q\cdot q}\right)^{1/2}\Bigg|_{u=v=0}\nonumber\\
&=
\left(
\dfrac{
p\cdot p (-P + \sgn P \sqrt{P^2 + 1}) - 
 2 p\cdot q + (P + \sgn P \sqrt{P^2 + 1})  q\cdot q
}
{
p\cdot p (P + \sgn P \sqrt{P^2 + 1}) + 
 2 p\cdot q + (-P + \sgn P \sqrt{P^2 + 1})  q\cdot q}
\right)^{1/2}\Bigg|_{u=v=0}\nonumber\\
&=
\Bigg(\dfrac{1}
{
2((p\cdot p)(q\cdot q)- (p\cdot q)^2)
}
\Big[
(p\cdot p)^2 + 2(p\cdot q)^2 + (q\cdot q)^2 \nonumber\\
&\hspace{20mm}
- \sgn(p\cdot p  -  q\cdot q)(p\cdot p  +  q\cdot q)
\sqrt{(p\cdot p  -  q\cdot q)^2+ 4 (p\cdot q)^2} 
\Big]\Bigg)^{1/2}\Bigg|_{u=v=0}\nonumber\\
&=\Bigg(
\dfrac{1}
{2(E_{uu} E_{vv}-E_{uv}^2)}
\Big[
E_{uu}^2+2 E_{uv}^2+E_{vv}^2\nonumber\\
&\hspace{30mm}
-\sgn(E_{uu}-E_{vv})(E_{uu}+E_{vv})
\sqrt{(E_{uu}-E_{vv})^2+4 E_{uv}^2}\Big]
\Bigg)^{1/2}\Bigg|_{u=v=0}\nonumber
\end{align}
where $P=(p\cdot p-q\cdot q)/(2p\cdot q)$, 
$4\theta=\cot^{-1}P$ and $E,F,G$ are the coefficients of
the first fundamental form.
If $p\cdot q=0$, then
$$
(\tilde q\cdot\tilde q)^{1/2}\Big|_{u=v=0}
=
\left(
\dfrac{q\cdot q}
{p\cdot p}\right)^{1/2}\Bigg|_{u=v=0}=
\left(
\dfrac{E_{vv}}
{E_{uu}}\right)^{1/2}\Bigg|_{u=v=0}.
$$
\begin{lemma}\label{lem:intrcoord}
A coordinate system $(u,v)$ satisfies 
\begin{equation}\label{eq:j1upvq}
j^1f_u(0)=up+vq,\quad
j^1f_v(0)=-vp+uq
\end{equation}
if and only if 
that $j^2\sqrt{EG-F^2}(0,0)$ is a non-zero multiple of
$u^2+v^2$, where $E,F,G$ are the coefficients of the
first fundamental form.
In particular, 
whether a coordinate system $(u,v)$ satisfies
\eqref{eq:j1upvq}
is intrinsically decidable.
\end{lemma}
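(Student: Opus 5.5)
The plan is to reduce everything to the $2$-jet of the signed area density function $\lambda$ from \eqref{eq:lambda}, and then do linear algebra in the plane $\nu(0)^\perp$.

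\textbf{Reduction to $\lambda$.} Since $df_0=0$, we can write
$$j^1f_u(0)=uA+vB,\qquad j^1f_v(0)=uB+vD,$$
with $A=f_{uu}(0)$, $B=f_{uv}(0)$, $D=f_{vv}(0)$. The same vector $B$ appears in both because $f_{uv}=f_{vu}$. So condition \eqref{eq:j1upvq} holds for some constant vectors $p,q$ exactly when $D=-A$ (take $p=A$, $q=B$). Since $f$ is a frontal, $f_u\times f_v=\lambda\nu$ with $\lambda=\det(f_u,f_v,\nu)$ smooth, and $\sqrt{EG-F^2}=|\lambda|$. Moreover $f_u\cdot\nu=f_v\cdot\nu=0$. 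Differentiating this at $0$ and using $df_0=0$ gives $A\cdot\nu_0=B\cdot\nu_0=D\cdot\nu_0=0$, where $\nu_0=\nu(0)$. Hence $A,B,D$ lie in the plane $\nu_0^\perp$. Expanding the cross product gives
$$j^2\lambda(0)=\det(A,B,\nu_0)\,u^2+\det(A,D,\nu_0)\,uv+\det(B,D,\nu_0)\,v^2 .$$
Write $[X,Y]=\det(X,Y,\nu_0)$ for the area form on $\nu_0^\perp$.

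\textbf{Handling the square root.} This is the only delicate point, because $|\lambda|$ need not be smooth. I would argue as follows. If $j^2\lambda=c(u^2+v^2)$ with $c\neq0$, then $\lambda$ has constant sign near $0$ away from the origin, so $|\lambda|=\pm\lambda$ and $j^2|\lambda|=|c|(u^2+v^2)$. Conversely, suppose $j^2\sqrt{EG-F^2}=c(u^2+v^2)$ with $c\ne0$. Let $Q$ be the quadratic form $j^2\lambda$. Then $|Q(u,v)|=c(u^2+v^2)+o(u^2+v^2)$, and by homogeneity $|Q|=c(u^2+v^2)$ exactly. A continuous quadratic form with nonvanishing absolute value on the circle has constant sign there, so $Q=\pm c(u^2+v^2)$. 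Thus the statement is equivalent to: $j^2\lambda(0)$ is a nonzero multiple of $u^2+v^2$.

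\textbf{Linear algebra.} The condition on $j^2\lambda$ reads $[A,D]=0$ and $[A,B]=[B,D]\neq0$.

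If $D=-A$ and $p=A$, $q=B$ are linearly independent, then $[A,D]=-[A,A]=0$ and $[B,D]=[A,B]\ne0$.

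Conversely, $[A,B]\neq0$ forces $A,B$ to be a basis of $\nu_0^\perp$, so we may write $D=sA+tB$. Then $[A,D]=t[A,B]=0$ gives $t=0$. Next, $[B,D]=s[B,A]=-s[A,B]$, and setting this equal to $[A,B]$ gives $s=-1$, so $D=-A$, which is \eqref{eq:j1upvq} with $p=A$, $q=B$.

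\textbf{Intrinsic decidability.} The final claim follows because $E,F,G$, and hence $\sqrt{EG-F^2}$ and its $2$-jet in the given coordinates, are determined by the first fundamental form alone.

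I expect no real obstacle beyond the non-smoothness of $\sqrt{EG-F^2}$, which the reduction to $\lambda$ handles.
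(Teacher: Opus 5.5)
Your proof is correct and follows the same route as the paper, whose entire argument is the observation that $EG-F^2=\lambda^2$ for the signed area density $\lambda$, followed by the bare assertion of the equivalence. You supply what the paper leaves implicit: the passage from $|\lambda|$ to $\lambda$ via homogeneity and sign constancy on the circle, and the linear algebra in $\nu(0)^\perp$ (using the frontal condition and $f_{uv}=f_{vu}$) showing that $[A,D]=0$, $[A,B]=[B,D]\neq0$ forces $f_{vv}(0)=-f_{uu}(0)$; you also make explicit the standing hypotheses $df_0=0$ and linear independence of $p,q$, which the equivalence genuinely needs.
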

\begin{proof}
Since $EG-F^2$ is a square of the signed area density,
the property
\eqref{eq:j1upvq} is equivalent to
that $j^2\sqrt{EG-F^2}(0,0)$ is a non-zero multiple of
$u^2+v^2$.
\end{proof}
We have the following theorem.
\begin{theorem}
Let $f:(\R^2,0)\to(\R^3,0)$ be a frontal satisfying
$\rank df_0=0$.
We assume that there exists an adapted coordinate system.
Then the invariant $\alpha$ is intrinsic.
\end{theorem}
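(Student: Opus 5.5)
The plan is to show that $\alpha$ can be computed from the first fundamental form alone, in any coordinate system of a class that the first fundamental form itself singles out. Lemma \ref{lem:intrcoord} and the formula \eqref{eq:alphafirst} do most of the work.

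\emph{Step 1: the class of coordinates is intrinsic.} By Lemma \ref{lem:intrcoord}, whether a coordinate system $(u,v)$ satisfies \eqref{eq:j1upvq} depends only on $E,F,G$. Indeed, it is equivalent to $j^2\sqrt{EG-F^2}(0,0)$ being a non-zero multiple of $u^2+v^2$. An adapted coordinate system exists by assumption, and it satisfies \eqref{eq:j1upvq} with $p_0=p(0,0)$ and $q_0=q(0,0)$. So the class $\mathcal{C}$ of coordinate systems satisfying \eqref{eq:j1upvq} is non-empty and is determined by the first fundamental form.

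\emph{Step 2: $\alpha$ is computable from $E,F,G$ in any member of $\mathcal{C}$.}
\begin{itemize}
\item Fix $(u,v)\in\mathcal{C}$ with constant vectors $p_0,q_0$.
\item Differentiating \eqref{eq:j1upvq} gives $f_{uu}=-f_{vv}=p_0$ and $f_{uv}=q_0$ at the origin.
\item Since $df_0=0$, the second derivatives of $E=f_u\cdot f_u$ at $0$ are $E_{uu}=2p_0\cdot p_0$, $E_{uv}=2p_0\cdot q_0$ and $E_{vv}=2q_0\cdot q_0$. The same holds for $G$ with the roles of $p_0$ and $q_0$ adjusted, and analogous identities hold for $F$.
\item The quantities $p_0\cdot p_0$, $p_0\cdot q_0$ and $q_0\cdot q_0$ are therefore read off from the $2$-jets of the metric coefficients. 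The factor $2$ cancels in the homogeneous expression \eqref{eq:alphafirst}.
\item Apply the rotation--dilation \eqref{eq:rot} as in Lemma \ref{lem:upvq}, which only needs $p_0,q_0$. This gives a system in $\mathcal{C}$ with $|\tilde p_0|=1$ and $\tilde p_0\cdot\tilde q_0=0$.
\item By Lemma \ref{lem:upvq2}(2), $|\tilde q_0|$ is independent of every choice made, and it equals $\alpha$.
\item The chain of equalities \eqref{eq:alphafirst} expresses $\alpha$ in terms of $E_{uu},E_{uv},E_{vv}$ at $0$. In the degenerate case $p\cdot q=0$ it reduces to $\alpha=(E_{vv}/E_{uu})^{1/2}$.
\end{itemize}

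\emph{Step 3: conclusion.} $\alpha$ is given by a formula in the metric coefficients evaluated in a coordinate system chosen intrinsically, and by Lemma \ref{lem:upvq2} the value does not depend on which such system is used. Hence $\alpha$ is intrinsic. Equivalently: if $f$ and $g$ have the same first fundamental form in some coordinates, then a coordinate system in $\mathcal{C}$ for $f$ is also in $\mathcal{C}$ for $g$, and the formula returns the same $\alpha$ for both.

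\emph{Main obstacle.} The delicate point is Step 2's reliance on Lemma \ref{lem:upvq2}(2). That lemma is stated for systems satisfying only the first-order condition \eqref{eq:j1upvq}, not full adaptedness, and this is what lets the intrinsic class $\mathcal{C}$ work rather than the possibly non-intrinsic class of adapted systems. I would double-check two things there:
\begin{itemize}
\item that \eqref{eq:alphafirst} holds verbatim under \eqref{eq:j1upvq} alone, which it should, since only the values of $p,q$ at $0$ enter;
\item that the sign conventions (the $\sgn$ factor, and $P$ undefined when $p\cdot q=0$) are handled, by treating the case $p\cdot q=0$ separately as the paper does.
\end{itemize}
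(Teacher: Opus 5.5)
Your argument is the paper's own: Lemma \ref{lem:intrcoord} makes the class $\mathcal C$ intrinsic, and \eqref{eq:alphafirst} reads $\alpha$ off the $2$-jet of $E$. It breaks at the step you lean on hardest, and the paper's one-line proof has the same hole. Lemma \ref{lem:upvq2}(2) is false as stated, so $|\tilde q_0|$ is \emph{not} independent of the choices made. The conditions $|\tilde p_0|=1$, $\tilde p_0\cdot\tilde q_0=0$ fix the angle in \eqref{eq:rot} only up to $2\theta\mapsto 2\theta+\pi/2$, and this shift swaps the roles of $\tilde p_0$ and $\tilde q_0$.

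Concretely, take $f=((u^2-v^2)/2,\alpha uv,0)+(a,b,c)$ with $\alpha>1$, and substitute $u=(x+y)/\sqrt{2\alpha}$, $v=(-x+y)/\sqrt{2\alpha}$. This is exactly the substitution used in the proof of Proposition \ref{prop:normalnotb}. It gives $j^2f=\frac{x^2-y^2}{2}(0,-1,0)+xy\,(1/\alpha,0,0)$. So $(x,y)$ is again in $\mathcal C$ (even adapted and orientation-compatible), with $|\tilde p_0|=1$ and $\tilde p_0\perp\tilde q_0$, but $|\tilde q_0|=1/\alpha$. Correspondingly, $j^2E=u^2+\alpha^2v^2$ in one system and $x^2+y^2/\alpha^2$ in the other. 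Formula \eqref{eq:alphafirst} then returns $\alpha$ in the first and $1/\alpha$ in the second, because the factor $\sgn(E_{uu}-E_{vv})$ is not invariant within $\mathcal C$. The proof of Lemma \ref{lem:upvq2} fails at ``by a rotation we can assume $p_0=\tilde p_0$'': a rotation of $\R^3$ moves $p_0$ and $\tilde p_0$ together. So the sign issue you flagged as the main obstacle is real, and it has nothing to do with the case $p\cdot q=0$.

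To close the gap, use the normalized invariant $\alpha=\max\{|\tilde q_0|,|\tilde q_0|^{-1}\}\geq 1$, as in Theorem \ref{thm:normal1}. Equivalently, $\alpha^2=\lambda_{\max}/\lambda_{\min}$, where these are the eigenvalues of the Hessian of $E$ at $0$; that Hessian is $2G$, with $G$ the Gram matrix of $p_0,q_0$. In other words, take \eqref{eq:alphafirst} with $-\sgn(E_{uu}-E_{vv})$ replaced by $+1$. Then prove invariance directly instead of citing Lemma \ref{lem:upvq2}:
\begin{itemize}
\item Let $(u,v)$ and $(x,y)$ both lie in $\mathcal C$, and let $L$ be the linear part of the coordinate change. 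Since $df_0=0$, $j^2f$ in $(x,y)$ is $j^2f$ in $(u,v)$ composed with $L$.
\item Since $p_0,q_0$ are independent, the components of $j^2f$ span exactly the trace-free quadratic forms in each system. So $L$ pulls trace-free forms back to trace-free forms, which forces $L\,\trans{L}$ to be scalar; hence $L$ is a similarity.
\item Therefore $(\tilde p_0,\tilde q_0)=(p_0,q_0)M$ with $M=\rho^2R$ and $R$ orthogonal, so $G$ becomes $\rho^4\,\trans{R}\,G\,R$. Its eigenvalue ratio is unchanged.
\end{itemize}
Combined with Lemma \ref{lem:intrcoord}, this shows that the normalized $\alpha$ is intrinsic.
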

\begin{proof}
By Lemma \ref{lem:intrcoord} and
the formula \eqref{eq:alphafirst},
we obtain the assertion.
\end{proof}

\subsection{Intersection curves}
In this section,
by studying the behavior of the intersection curves near 
the singular point, we clarify another geometric meaning of $\alpha$.
We assume $f$ is a $D_4^-$-singularity and written as 
the form \eqref{eq:normald4m} with $k=3$.
Let us set $(u,v)=(r\cos\theta,r\sin\theta)$ and
$i(t)=(r(t)\cos\theta(t),r(t)\sin\theta(t))$.
Let
$\hat i=f\circ i$
be a self-intersection curve,
we assume $\hat i(t)=\hat i(-t)$.
By a change of paramter, we may assume $r'(0)=1$,
since $f$ is $\A$-equivalent to $f_0$,
and the self-intersection curves $\hat i_0$ of $f_0$ satisfy
$\hat i_0(t)=\hat i_0(-t)$.
We write $r(t)=t+r^2t^2+O(3)$ and
$\theta(t)=\theta_0+\theta_1t+O(2)$.
Then by $\hat i(t)=\hat i(-t)$,
we have
\begin{align*}
\theta_0=&\dfrac{1}{3}(\tan^{-1}(c_{21},c_{12})+\pi j)\quad
(j=0,1,2,\ldots,5),\\
\theta_1=&
\dfrac{b_{21} c_{12}-b_{12} c_{21}}{2 \alpha \sqrt{c_{12}^2+c_{21}^2}}
\cos\Big(\dfrac{2}{3} (\tan^{-1}(c_{21},c_{12})+\pi j)\Big),\\
r_2=&
\dfrac{-(b_{21} c_{12}-b_{12} c_{21}) }{2 \alpha \sqrt{c_{12}^2+c_{21}^2}}
\sin\Big(\dfrac{2}{3} (\tan^{-1}(c_{21},c_{12})+\pi j)\Big),
\end{align*}
where  $\tan^{-1}(c_{21}, c_{12})$ is defined as 
the two-argument arctangent, giving the oriented angle 
from $(1,0)$ to $(c_{21}, c_{12})$.
Thus the ray tangent to $\hat i(t)$ at $t=0$ is
in the positive directions of 
$$
V_j=\left(
\cos\left(\dfrac{2}{3}(\tan^{-1}(c_{21},c_{12})+2\pi j)\right),
\alpha \sin\left(\dfrac{2}{3} (\tan^{-1}(c_{21},c_{12})+2\pi j)\right),
0\right),
$$
where $j=0,1,2$.
The area of three points $V_j/|V_j|$ $(j=0,1,2)$ is
the same as the area of the triangle generated by
$\{(1,0)$, $(1,\sqrt{3}\alpha)/(1+3\alpha^2)$, $(-1,\sqrt{3}\alpha)/(1+3\alpha^2)\}$,
and the absolute value of it is 
\begin{equation}\label{eq:triarea}
\frac{\sqrt3\alpha\bigl(1+\sqrt{1+3\alpha^2}\bigr)}{1+3\alpha^2}.
\end{equation}
\subsection{Curvatures along curves}
We consider the geodesic curvature of 
a loop encircling the singular point
(respectively, a curve passing through the singular point)
$c:\theta\mapsto(r\cos\theta,r\sin\theta)$ and
$\hat c=f(c(\theta))$
(respectively,
$g:r\mapsto(r\cos\theta,r\sin\theta)$ and
$\hat g=f(g(r))$).
See \cite{hara,haya} for studies of this kind on other singularities.
At a singular point, the geodesic curvature usually unbounded,
we calculate the geodesic curvature measure instead
as in Appendix \ref{sec:kgkn}, they are bounded.
We set
$\tilde\kappa_g\,d\theta=\kappa_g\,ds$
(respectively, 
$\hat\kappa_g\,dr=\kappa_g\,ds$)
for $\hat c(\theta)$
(respectively, 
$\hat g(r)$),
where $s$ is an arclength of 
$\hat c(\theta)$
(respectively, $\hat g(r)$),
and we set
$\tilde\kappa_n\,d\theta=\kappa_n\,ds$
(respectively, 
$\hat\kappa_n\,dr=\kappa_n\,ds$)
for $\hat c(\theta)$
(respectively, 
$\hat g(r)$),
where $s$ is an arclength of 
$\hat c(\theta)$
(respectively, $\hat g(r)$).
The coordinate system $(u,v)$ in 
Theorem \ref{thm:normal1} is unique if $\alpha>1$,
the parameter $(r,\theta)$ has a meaning.
By a direct calculation, we see
$\hat c'|_{r=0}=0$
and
$\hat c''|_{r=0}=(\cos2\theta,\alpha \sin2\theta)/2$,
where $'=d/d\theta$
(respectively,
$\hat g'|_{r=0}=0$
and
$\hat g''|_{r=0}=(-\sin2\theta,\alpha \cos2\theta,0)$,
where $'=d/d\theta$),
and $\hat c$ and $\hat g$ are
frontals as curves on a frontal.
Thus we have
\begin{equation}\label{eq:kg2dir}
\tilde\kappa_g|_{r=0}=
\dfrac{4 \alpha}{3 (1+\alpha^2+(-1+\alpha^2) \cos 4\theta)},
\quad
\hat\kappa_g|_{r=0}=
\dfrac{\cos2 \theta (b_{12} \cos 3 \theta-b_{21} \sin 3\theta)}
{-1-\alpha^2+(-1+\alpha^2) \cos 4\theta}
\end{equation}
and
\begin{equation}\label{eq:kn2dir}
\tilde\kappa_n|_{r=0}=0,
\quad
\hat\kappa_n|_{r=0}=
\dfrac{-3 (c_{12}\cos 3\theta - c_{21}\sin 3\theta)}
{\sqrt{2}\,\sqrt{1+\alpha^{2} - (-1+\alpha^{2})\cos 4\theta}}.
\end{equation}

We set $k_g(\theta)$ (respectively, $\kappa_n(\theta)$)
be the right hand side of 
$\hat\kappa_g|_{r=0}$ in \eqref{eq:kg2dir},
(respectively, $\hat\kappa_n|_{r=0}$ in \eqref{eq:kn2dir})
and calculate the value of $k_g$ and $k_n$ at the
initial angles of intersection curve and its
the angular midpoints:
$$
\psi_{1j}=\dfrac{1}{3}(\tan^{-1}(c_{21},c_{12})+\pi j),\ 
\psi_{2j}=\dfrac{1}{3}(\tan^{-1}(c_{21},c_{12})+\pi/2+\pi j)\ 
(j=0,1,2,\ldots,5).
$$
We set $z=\sqrt{c_{12}^2 + c_{21}^2}$
and
$\phi = \arctan(c_{21},c_{12})$.
Then it holds that
$\cos\phi = c_{12}/z$,
$\sin\phi = c_{21}/z$.
Thus
\[
\cos 3\psi_{1j} = (-1)^j\frac{c_{12}}{z},\ 
\sin 3\psi_{1j} = (-1)^j\frac{c_{21}}{z},\ 
\cos 3\psi_{2j} = (-1)^{j+1}\frac{c_{21}}{z},\ 
\sin 3\psi_{2j} = (-1)^j\frac{c_{12}}{z}.
\]
Hence we have
\begin{equation}\label{eq:kgknint}
\begin{array}{rcl}
\displaystyle
k_g(\psi_{1j})
&=&
\displaystyle
\frac{
(-1)^j\,(b_{12}c_{12} - b_{21}c_{21})\,
\cos\!\left(2\psi_{1j}\right)
}
{
z\Big(-1-\alpha^2
+
(-1+\alpha^2)\cos\!\left(4\psi_{1j}\right)\Big)
},\\
k_g(\psi_{2j})
&=&
\displaystyle
\frac{
(-1)^j\,(-b_{12}c_{21} - b_{21}c_{12})\,
\cos\!\left(2\psi_{2j}\right)
}
{
z\Big(-1-\alpha^2
+
(-1+\alpha^2)\cos\!\left(4\psi_{2j}\right)\Big)
},\\
k_n(\psi_{1j})
&=&
\displaystyle
\dfrac{-3(-1)^{j}(c_{12}^{2}-c_{21}^{2})}
{\sqrt{2}z\sqrt{1+\alpha^{2}-(-1+\alpha^{2})\cos\!\left(4\psi_{1j}\right)}},\\
k_n(\psi_{2j})
&=&
\displaystyle
\dfrac{6(-1)^{j}c_{12}c_{21}}
{\sqrt{2}z\sqrt{1+\alpha^{2}-(-1+\alpha^{2})\cos\!\left(4\psi_{2j}\right)}}.
\end{array}
\end{equation}
The formulas \eqref{eq:alphafirst} or \eqref{eq:triarea}
with
\eqref{eq:kgknint} express the geometric meaning of 
the coefficients $\alpha,b_{21},b_{12},c_{21},c_{12}$ of the
form in Theorem \ref{thm:normal1}.

\section{Gauss-Bonnet type theorem}
A map $f:M\to \R^3$ 
between $2$-dimensional closed manifold $M$ and $\R^3$
is called 
a {\it frontal\/} (respectively, a {\it front})
if for any $p\in M$, the germ $f$ at $p$ is a 
a frontal (respectively, front), namely, 
a unit normal vector $\nu$ is defined locally.
A frontal (respectively, front) is coorientable if
the domain of a unit normal vector $\nu$ can be extended to whole $M$.
Gauss-Bonnet type theorems for coorientable fronts are obtained in
\cite{suyfront, suykyushu, suycoh}, and it is generalized 
to the case of $\partial M\ne\emptyset$ in \cite{dz}.
In these theorems, it is assumed that all singularities $p$ of $f$ 
satisfy $\rank df_p=1$.
Here, we show a Gauss-Bonnet type theorem for coorientable
fronts with $D_4^\pm$-singularities.
A front $f:M\to\R^3$ is called a {\it $1$-parameter-generic front},
if for any singular point $p\in S(f)$ is 
cuspidal edge, swallowtail, cuspidal lips/beaks, cuspidal butterfly
or $D_4^\pm$-singularities.
We follow the proof of the theorem which
is given in \cite[Section 2]{suyfront} and in \cite[Sections 2,3]{suykyushu}.
Since integral terms appear in the Gauss-Bonnet type theorem,
we see the boundedness of the geodesic curvature, 
singular curvature measure and
Gaussian curvature measure for fronts.
\subsection{Boundedness of several measures}

We set the area form $dA$ by
$dA=\sqrt{EG-F^2}\,du\wedge dv=|\lambda|\,du\wedge dv$
for a coordinate system\/ $(u,v)$,
where $\lambda$ is the signed area density function
$\lambda=\det(f_u,f_v,\nu)$ defined in 
\eqref{eq:lambda}.
We also set $d\hat{A}$ by
$d\hat{A}=\lambda\,du\wedge dv$.
\begin{lemma}\label{lem:kgbdd}
Let\/ $f:(\R^2,0)\to(\R^3,0)$ be a frontal, and\/ $\rank df_0=0$.
{\rm (1)}
A curve\/ $\gamma:(\R,0)\to(\R^2,0)$ be a curve 
of finite multiplicity.
Let\/ $\kappa_g$ be the geodesic curvature of\/ $\gamma$.
Then\/
$
\kappa_g\,ds
$
is a bounded\/ $1$-form, 
where\/ $s$ is an arc-length parameter of\/ $f\circ \gamma$, namely,
$ds=|f\circ \gamma'|\,dt$ for a parameter\/ $t$.
{\rm (2)}
Let\/ $f:(\R^2,0)\to(\R^3,0)$ be a frontal, and\/ $\rank df_0=0$.
The Gaussian curvature form\/ $K\,dA$ and\/ $K\,d\hat{A}$ 
for a coordinate system\/ $(u,v)$
are bounded\/ $2$-forms.
\end{lemma}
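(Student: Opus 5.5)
The plan is to treat both parts with the frontal structure alone: write every unbounded quantity as a bounded numerator divided by a factor that the form's own density cancels. Throughout we fix the unit normal $\nu$ (smooth, since $f$ is a frontal) and the signed area density $\lambda=\det(f_u,f_v,\nu)$ from \eqref{eq:lambda}.

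For (1), write $\hat\gamma=f\circ\gamma$. With $ds=|\hat\gamma'|\,dt$, the geodesic curvature is
$$
\kappa_g=\frac{\det(\hat\gamma',\hat\gamma'',\nu\circ\gamma)}{|\hat\gamma'|^3},
\qquad\text{so}\qquad
\kappa_g\,ds=\frac{\det(\hat\gamma',\hat\gamma'',\nu\circ\gamma)}{|\hat\gamma'|^2}\,dt .
$$
Since $\gamma$ has finite multiplicity and $\hat\gamma$ is a nonconstant analytic-type germ, write $\hat\gamma'(t)=t^m\,w(t)$ with $w(0)\neq0$. (For a smooth $\gamma$ we assume, as in the paper's setting, that $f\circ\gamma$ has finite order at $0$; this is the sense in which we use finite multiplicity.) Then
$$
\hat\gamma''=m t^{m-1}w+t^m w',
\qquad
\det(\hat\gamma',\hat\gamma'',\nu)=t^{2m}\det(w,w',\nu),
$$
because the $w\times w$ term vanishes. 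The denominator is $|\hat\gamma'|^2=t^{2m}|w|^2$, so the factors $t^{2m}$ cancel and
$$
\kappa_g\,ds=\frac{\det(w,w',\nu)}{|w|^2}\,dt,
$$
which is smooth and hence bounded.

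The main obstacle is justifying the factorization $\hat\gamma'=t^m w$ with $w(0)\neq0$. If $\gamma$ is only smooth, $f\circ\gamma$ could be flat at $0$, so I would state the needed order condition as the precise meaning of ``finite multiplicity.'' With that condition, the Taylor expansion gives the factorization immediately.

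For (2), set $\nu_u=-\,\mathrm{II}$-type data. The key identity is $\det(\nu_u,\nu_v,\nu)=K\lambda$ at regular points, because $K$ is the ratio of the area element of $\nu$ to that of $f$. It follows that
$$
K\,d\hat A=\det(\nu_u,\nu_v,\nu)\,du\wedge dv,
$$
which is smooth on the whole neighborhood. Since $K\,dA=\sgn(\lambda)\,K\,d\hat A$, it is also bounded, though discontinuous across $S(f)$.

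Both forms are defined off $S(f)$, which is nowhere dense because $\lambda\not\equiv0$ for the fronts under consideration. On that complement they coincide with these bounded expressions, which is the sense of boundedness required. The hypothesis $\operatorname{rank}df_0=0$ plays no role beyond placing us in the current setting.
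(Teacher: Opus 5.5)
Your proposal is correct and follows the paper's proof. For (1) the paper likewise writes $(f\circ\gamma)'=t^n x(t)$ with $x(0)\ne0$ and cancels the powers of $t$. For (2) the paper's expression $\det\bigl(X(\nu_u,\nu_v,\nu)\bigr)(\det X)^{-1}\,du\wedge dv$, with $X$ the matrix of rows $f_u,f_v,\nu$, reduces via $\det\bigl(X(\nu_u,\nu_v,\nu)\bigr)=\det X\,\det(\nu_u,\nu_v,\nu)$ to exactly your $\det(\nu_u,\nu_v,\nu)\,du\wedge dv$.
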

\begin{proof}
(1) Since $\gamma$ is finite multiplicity, one can set
$(f\circ \gamma)'=t^n x(t)$ by using a non-zero vector field $x$.
Then by a direct calculation, we see the boundedness
of $\kappa_g\,ds$.
(2) Setting 
$$X=\pmt{f_u\\ f_v\\ \nu},$$
it is easy to see the Gaussian curvature $K$ 
with the area form $dA$ satisfies
$$
K\,dA=
\dfrac{\det\left(
X(\nu_u, \nu_v,\nu)\right)}
{(\det X)^2}\,\det X\,du\wedge dv.
$$
It shows the assertion. See \cite[Lemmas 4.1, 4.2]{d4geom} for
datail.
\end{proof}
Since the singular curvature is defined as the geodesic curvature
with a certain sign ({}\cite[Section 2]{suyfront}), 
the boundedness of singular curvature follows from this Lemma.

\subsection{Initial vectors and inner angles}
We show the continuity of initial vector.
Let 
$f:(\R^2,0)\to(\R^3,0)$ be a $D_4^-$-singularity, and
let $\gamma:(\R,0)\to(\R^2,0)$ be a regular curve.
Let 
$c_t:(\R,0)\to(\R^2,\gamma(t))$ be a regular curve
starting from $\gamma(t)$.
We set $\hat\gamma=f\circ \gamma$, $\hat c_t=f\circ c_t$, and
\begin{equation}\label{eq:initial}
\Psi_c(\gamma(t))
=
\dfrac{\dfrac{d\hat{c}_t(w)}{dw}\Big\vert_{w=0}}
{\left|\dfrac{d\hat{c}_t(w)}{dw}\Big\vert_{w=0}\right|}.
\end{equation}
The vector $\Psi_c(\gamma(t))$ is called the {\it initial vector} of
$c_t$ at $\gamma(t)$. 
We remark since $c$ can be taken as $c_t(w)=\gamma(w+t)$,
this notion includes
the tangent vector of $\gamma$.
See Figure \ref{fig:initial} for these settings.
\begin{figure}[ht]
\begin{center}
\includegraphics[width=.3\linewidth]{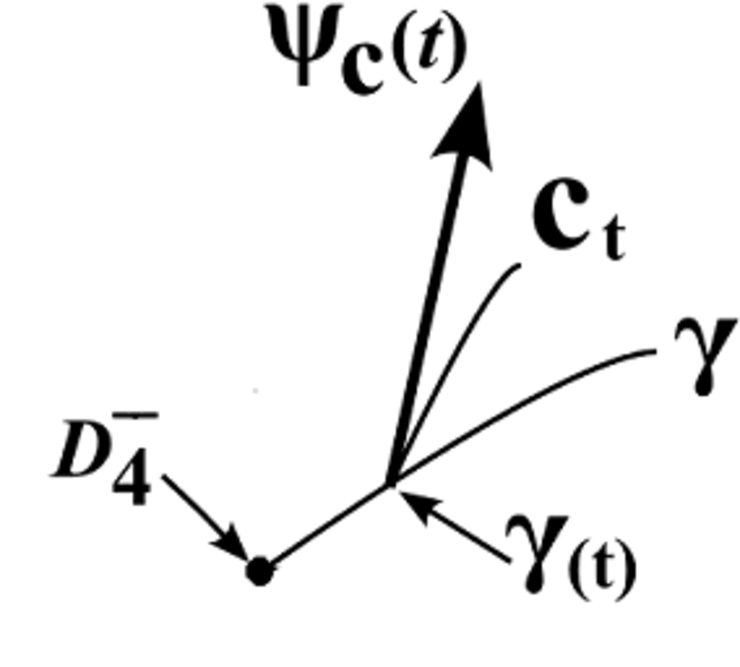}
\end{center}
\caption{Initial vectors}
\label{fig:initial}
\end{figure}

\begin{lemma}[Continuity of the initial vector]\label{lem:contvec}
Under the above assumption,
the initial vector\/ $\Psi_c(t)$ is continuous 
for\/ $t\geq0$ and\/ $\Psi_c(0)\ne0$.
\end{lemma}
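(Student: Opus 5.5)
The plan is to reduce to the normal form and compute the initial vector through a blow-up at the singular point. By Lemma~\ref{lem:basispl} and Theorem~\ref{thm:normal1}, after an isometry and a coordinate change we may assume $f$ is in the normal form \eqref{eq:normald4m}. In particular, $(u,v)$ is adapted with associated frame $\{p,q\}$, $p(0)=(1,0,0)$, $q(0)=(0,\alpha,0)$.

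For $t>0$, the point $\gamma(t)$ is a regular point of $f$, since $S(f)=\{0\}$. There the initial vector is simply
\[
\Psi_c(\gamma(t))=\frac{df_{\gamma(t)}(c_t'(0))}{|df_{\gamma(t)}(c_t'(0))|}.
\]
This depends continuously on $t$ for $t>0$ provided $c_t'(0)$ does; I take this as part of the setting, with $c$ a smooth family. So the only issue is at $t=0$: we must show that the limit as $t\to+0$ exists and equals $\Psi_c(0)$, which is defined through the first nonvanishing derivative of $\hat c_0$.

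The key identity is \eqref{eq:compati1}. Write $\gamma(t)=(u(t),v(t))$ and $c_t'(0)=(\xi(t),\eta(t))$. Then
\[
df_{\gamma(t)}(c_t'(0))=(u\xi-v\eta)\,p+(v\xi+u\eta)\,q,
\]
evaluated at $\gamma(t)$. This is the complex product $(u+iv)(\xi+i\eta)$ fed into the frame $\{p,q\}$. Since $\gamma$ is regular, $u+iv=t(\gamma_1+i\gamma_2)+O(t^2)$ with $(\gamma_1,\gamma_2)=\gamma'(0)\ne0$. Dividing by $t$, we get
\[
\frac1t\,df_{\gamma(t)}(c_t'(0))\;\to\;(\gamma_1\xi_0-\gamma_2\eta_0)\,p(0)+(\gamma_2\xi_0+\gamma_1\eta_0)\,q(0),
\]
where $(\xi_0,\eta_0)=c_0'(0)\ne0$. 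The limit is nonzero because $p(0),q(0)$ are linearly independent and the complex product of two nonzero numbers is nonzero. Normalizing, $\Psi_c(\gamma(t))$ converges to the normalized version of this vector, so the limit is a unit vector, hence nonzero.

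It remains to identify this limit with $\Psi_c(0)$. At $t=0$, the curve $\hat c_0(w)=f(c_0(w))$ satisfies
\[
\hat c_0'(w)=\big((u\xi-v\eta)\,p+(v\xi+u\eta)\,q\big)\big|_{c_0(w)},
\]
now with $(\xi,\eta)=c_0'(w)$. Expanding in $w$ gives $w\,\big((\xi_0^2-\eta_0^2)\,p(0)+2\xi_0\eta_0\,q(0)\big)+O(w^2)$, and the leading coefficient is nonzero. The initial vector at a singular point should be understood as $\lim_{w\to+0}\hat c_0'(w)/|\hat c_0'(w)|$; with that reading, $\Psi_c(0)$ is the normalization of $(\xi_0+i\eta_0)^2$ in the frame $\{p(0),q(0)\}$.

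The main obstacle is matching the two limits. The $t\to0$ limit is the normalization of $(\gamma_1+i\gamma_2)(\xi_0+i\eta_0)$, and this agrees with the $t=0$ value only when $c_0$ starts in the direction of $\gamma'(0)$, for instance in the model case $c_t(w)=\gamma(t+w)$. I would therefore make the hypotheses precise. Either require that the direction of $c_t$ at $0$ be tangent to $\gamma$, or interpret continuity at $t=0$ as one-sided continuity of the angle function in the blow-up. That blow-up is the polar coordinates $(r,\theta)$ used in the section on curvatures along curves, where the initial direction is $e^{i\cdot}$-multiplicative. In either formulation the computation above, which uses only \eqref{eq:compati1} and the linear independence of $p(0)$ and $q(0)$, gives continuity and nonvanishing. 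Finally, I would note that the uniform $O(t^2)$ remainders come from the smoothness of $p,q,\gamma,c$, which justifies exchanging the limits.
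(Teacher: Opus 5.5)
Your argument is the paper's own proof. Both use \eqref{eq:compati1} to write $d\hat c_t/dw|_{w=0}$ as the complex product of $c_t(0)=\gamma(t)$ and $(c_t)_w(0)$ in the frame $\{p,q\}$, factor $t$ out of $\gamma(t)$, and obtain a continuous nonvanishing extension to $t=0$ from the regularity of $\gamma$ and $c$ and the independence of $p(0),q(0)$.

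Your caveat is also correct and goes beyond the paper, which tacitly defines $\Psi_c(0)$ as this limit. That limit is the normalization of $(\gamma_1+i\eta_0\cdot 0+i\gamma_2)(\xi_0+i\eta_0)$, i.e.\ of $(\gamma_1+i\gamma_2)(\xi_0+i\eta_0)$, whereas the initial direction of $\hat c_0$ is the normalization of $(\xi_0+i\eta_0)^2$. The two agree only when $c_0'(0)$ is a positive multiple of $\gamma'(0)$, as for $c_t(w)=\gamma(t+w)$. Consequently, the convergence this lemma is invoked for at the edge $A_tC$ in the proof of Lemma~\ref{lem:locgb} does not actually follow from it.
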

\begin{proof}
One can assume $f$ is given by the right-hand side of \eqref{eq:normald4m}
with $k=3$.
Let us set 
$\gamma=(\gamma_1,\gamma_2)$, and $c_t=(c_1(t,w),c_2(t,w))$
satisfying $c_t(0)=\gamma(t)$.
Then 
$$
\dfrac{d\hat{c}_t(w)}{dw}\Big\vert_{w=0}
=
(p,q)\pmt{
(c_1)_w&-(c_2)_w\\
(c_2)_w& (c_1)_w}
\pmt{c_1\\ c_2}(t,0)
$$
holds.
Since $c$ is a regular curve, there exists
$\tilde c(t)=(\tilde c_1(t),\tilde c_2(t))$ such that
$c(t)=t\tilde c(t)$, where
$\tilde c(0)\ne(0,0)$.
Then 
$$\dfrac{d\hat{c}_t(w)}{dw}\Big\vert_{w=0}
\text{ is parallel to }
(p,q)\pmt{
(c_1)_w&-(c_2)_w\\
(c_2)_w& (c_1)_w}
\pmt{\tilde c_1\\ \tilde c_2}(t,0)
$$
Since $c$ is regular, 
$(c_1)_w^2+(c_2)_w^2\ne0$ holds,
and this implies 
the above vector does not vanish.
This proves the assertion.
\end{proof}

Let $f:(\R^2,0)\to (\R^3,0)$ be a frontal with $df_0=0$,
whose unit normal vector is $\nu$.
We assume that there exist an adapted coordinate system $(u,v)$,
and $f$ is written by \eqref{eq:normald4m} with $k=3$.
We calculate the initial vector concretely.
Let $\gamma:([0,\ep),0)\to(\R^2,0)$ be a curve
defined by
$\gamma(t)=(r(t)\cos\theta(t),r(t)\sin\theta(t))$
for $\ep>0$
emanating from the origin.
We set $\hat\gamma=f\circ \gamma$.
Then we see
\begin{align*}
\hat\gamma&=\dfrac{r(t)^2}{2}\Big((\cos2\theta,\alpha\sin2\theta,0)+O(r)\Big),\\
\hat\gamma'&=r(t)\Big((r'\cos2\theta,\alpha r'\sin2\theta,0)+O(r)\Big).
\end{align*}
Thus
\begin{equation}\label{eq:psigamma}
\Psi_\gamma(0)=\dfrac{(\cos2\theta,\alpha\sin2\theta,0)}
{\cos^22\theta+\alpha^2\sin^22\theta}.
\end{equation}

\subsection{Summension of inner angles}
Let $f:M\to \R^3$ be a one-parameter generic front.
A singular point $p\in S(f)$ is said to be of {\it corank one\/}
(respectively, {\it corank two\/})
if $\rank df_p=1$ (respectively, $\rank df_p=0$) holds.
A curve-germ $\gamma:([0,\ep),0)\to (M,p)$ is said to be
{\it admissible\/} if $\gamma$ is regular, and
one of the following holds:
\begin{enumerate}
\item $\gamma([0,\ep))\cap S(f)=\emptyset$;
\item $p$ is a corank one singular point, 
      $\gamma((0,\varepsilon))\subset M\setminus S(f)$, and 
      $\gamma'(0)$ is neither tangent to $S(f)$ nor a null vector at $p$;
\item $p$ is a corank two singular point, 
      $\gamma((0,\varepsilon))\subset M\setminus S(f)$, and 
      $\gamma'(0)$ is not an isotropic direction of $\lambda$ at $p$, 
      that is, not a vector along which the quadratic part of $\lambda$ 
      at $p$ vanishes, where $\lambda$ is a signed area density function;
\item $\gamma([0,\varepsilon))\subset S(f)$.
\end{enumerate}
Let $\gamma:([0,\ep),0)\to (M,p)$ be an admissible curve-germ,
and let $c:([0,\ep),0)\to (M,\gamma(t))$ be an admissible curve-germ.
Then the continuity of
initial vector
$\Psi_c(\gamma(t))$ holds.
See \cite[Proposition 2.6]{suykyushu} for $p$ is a corank one singular point,
and see \cite[Lemma 4.3]{d4geom} for $p$ is a $D_4^+$-singular point.
A curve $\gamma:[a,b]\to M$ is said to be {\it admissible\/} if
the curve-germ $\gamma$ at $a$ and at $b$ is admissible,
and if $\gamma((a,b))\cap S(f)\ne\emptyset$, then $\gamma((a,b))\subset S(f)$.
Let $\gamma_i:([0,\ep),0)\to (M,p)$ $(i=1,2)$ be two admissible curve-germs.
We assume the angle of $\gamma_1'(0)$ and $\gamma_2'(0)$ is
less than $\pi/2$.
Then the angle
$$
\cos^{-1}\left(
\dfrac{ \Psi_{\gamma_1}(\gamma_1(t))\cdot\Psi_{\gamma_2}(\gamma_2(t))}
{|\Psi_{\gamma_1}(\gamma_1(t))|\,| \Psi_{\gamma_2}(\gamma_2(t))|}\right)
$$
is denoted by $\angle_{\gamma_1,\gamma_2}p$ or $\angle p$,
and called the {\it inner angle of\/ $\gamma_1$ and\/ $\gamma_2$}.
For the well-definedness of the inner angle, we show the following lemma.
\begin{lemma}\label{lem:angle}
Let\/ $\gamma_i:([0,\ep),0)\to(\R^2,0)$ $(i=1,2)$ be two curves
defined by\/
$\gamma_i(t)=(r_i(t)\cos\theta_i(t),r_i(t)\sin\theta_i(t))$
emanating from the origin.
We set\/ $\hat\gamma_i=f\circ \gamma_i$.
If\/ $\theta_2-\theta_1<\pi/(2\alpha)$, then
the angle between
$$
\Psi_{\gamma_1}(0)\quad\text{and}\quad
\Psi_{\gamma_2}(0)
$$
is less than\/ $\pi$.
\end{lemma}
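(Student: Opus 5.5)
We prove Lemma \ref{lem:angle} directly from the explicit initial vector \eqref{eq:psigamma}. As in the proof of Lemma \ref{lem:contvec}, we may assume that $f$ is in the normal form \eqref{eq:normald4m} with $k=3$. Then, for a curve $\gamma_i(t)=(r_i(t)\cos\theta_i(t),r_i(t)\sin\theta_i(t))$ emanating from the origin, the initial vector $\Psi_{\gamma_i}(0)$ is the positive multiple of
$$
W(\theta_i)=(\cos2\theta_i,\alpha\sin2\theta_i,0)
$$
with $\theta_i=\theta_i(0)$. Here we use $r_i'(0)>0$, since $\gamma_i$ is regular and emanates from $0$. This holds because $\hat\gamma_i'=r_i\bigl(r_i'W(\theta_i)+O(r_i)\bigr)$, so the limiting direction is $W(\theta_i)/|W(\theta_i)|$. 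The question therefore reduces to the angle between two vectors of the form $W(\theta)$ in the plane $(0,0,1)^\perp$.

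Set $\varphi_i=2\theta_i$. The map $\varphi\mapsto(\cos\varphi,\alpha\sin\varphi)$ parametrizes an ellipse, and the argument of $(\cos\varphi,\alpha\sin\varphi)$ is $\beta(\varphi)=\tan^{-1}(\cos\varphi,\alpha\sin\varphi)$, a strictly increasing function of $\varphi$. Differentiating gives
$$
\beta'(\varphi)=\dfrac{\alpha}{\cos^2\varphi+\alpha^2\sin^2\varphi}.
$$
Hence the oriented angle from $\Psi_{\gamma_1}(0)$ to $\Psi_{\gamma_2}(0)$ equals $\int_{\varphi_1}^{\varphi_2}\beta'(\varphi)\,d\varphi$. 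The unoriented angle is strictly less than $\pi$ exactly when this integral lies in $(-\pi,\pi)$ modulo $2\pi$, apart from the antipodal case. Since $\beta$ increases by exactly $\pi$ whenever $\varphi$ increases by $\pi$, it suffices to show that the integral is strictly less than $\pi$, assuming (after relabelling) that $0\le\theta_2-\theta_1$.

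The key step, and the place where the hypothesis enters, is the bound $\beta'(\varphi)\le\max(\alpha,1/\alpha)=\alpha$, using $\alpha\geq1$ from Theorem \ref{thm:normal1}. The maximum $\alpha$ is attained at $\varphi=0$, where the denominator equals $1$. This gives
$$
\int_{\varphi_1}^{\varphi_2}\beta'\,d\varphi\le\alpha(\varphi_2-\varphi_1)=2\alpha(\theta_2-\theta_1)<\pi
$$
when $\theta_2-\theta_1<\pi/(2\alpha)$, so the angle is less than $\pi$.

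I expect the main obstacle to be the factor of $2$. The map $\theta\mapsto2\theta$ doubles angles, so this crude estimate uses the full hypothesis $\theta_2-\theta_1<\pi/(2\alpha)$ and leaves no room. The equality cases also need care: since $\beta'<\alpha$ except at isolated points, the integral inequality is strict whenever $\varphi_2>\varphi_1$. If a sharper statement is intended, I would instead compute $\beta(\varphi_2)-\beta(\varphi_1)$ exactly via $\tan\beta=\alpha\tan\varphi$. Finally, I would remark that the case $\theta_2-\theta_1<0$ is symmetric.
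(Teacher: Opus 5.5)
Your proposal is correct and follows essentially the same route as the paper. The paper likewise takes the direction of $\Psi_{\gamma}(0)$ to be $(\cos2\theta,\alpha\sin2\theta,0)$ and bounds the derivative of its argument by $2/\alpha\le W'(\theta)\le 2\alpha$, which is your bound $\beta'\le\alpha$ after substituting $\varphi=2\theta$; it then concludes with the mean value theorem where you integrate. One side remark is inaccurate: $\beta'<\alpha$ off isolated points fails when $\alpha=1$, since then $\beta'\equiv1$. This is harmless, because strictness already comes from the hypothesis $2\alpha(\theta_2-\theta_1)<\pi$.
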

\begin{proof}
In the formula \eqref{eq:psigamma},
we consider $\theta$ as a variable, and set
$V(\theta)=(\cos2\theta,\alpha\sin2\theta)$.
We set the angle between $V(\theta)$ and $(1,0)$ as $W(\theta)$.
Then we have
$$
W(\theta)=\arctan \Big(\alpha\tan 2\theta\Big)+n\pi$$
where $n$ is an integer.
We have
$$
W'(\theta)=\dfrac{d}{d\theta}W(\theta)
=
\dfrac{2\alpha(1+t)}{1+\alpha^2 t}, \big(t=\tan^22\alpha\geq0\big).
$$
By a calculus, since
$dW'(\theta)/dt=-2 \alpha (\alpha^2-1)/(1 + \alpha^2 t)^2\leq0$ and
$$
\lim_{t\to0}W'(\theta)=2\alpha,\quad
\lim_{t\to\infty}W'(\theta)=\dfrac{2}{\alpha},
$$
we obtain $2/\alpha\leq W'(\theta)\leq 2\alpha$.
By the mean value theorem, for any $\theta_0$ and $\theta_1$ satisfying
$\theta_0<\theta_1$,
there exists $\xi$ such that
$$
\dfrac{W(\theta_1)-W(\theta_0)}{\theta_1-\theta_0}=W'(\xi).
$$
This implies that 
$$
|W(\theta_1)-W(\theta_0)|< 2\alpha \dfrac{\pi}{2\alpha}=\pi
$$
under the assumption $\theta_1-\theta_0<\pi/(2\alpha)$.
By \eqref{eq:psigamma}, since the angle of $\Psi_{\gamma_i}(0)$ is
$W(\theta_i)$, 
we have the assertion.
\end{proof}

\begin{lemma}
Let\/ $f:(\R^2,0)\to(\R^3,0)$ be a frontal, and\/ $\rank df_0=0$.
We assume that there exists an adapted coordinate system.
Let $\gamma_i:([0,\ep),0)\to(\R^2,0)$ $(i=1,2)$ be distinct curves
defined by
$\gamma_i(t)=(r_i(t)\cos\theta_i(t),r_i(t)\sin\theta_i(t))$
emanating from the origin, satisfying $\theta_1(0)<\ldots<\theta_n(0)$.
If $\theta_{i+1}-\theta_i<\pi/(2\alpha)$ for any $i\in\{1,\ldots,n-1\}$,
then the total inner angle satisfy
$$
\sum_{i=1}^{n-1}
\Big(\Psi_{\gamma_{i+1}}(0)-\Psi_{\gamma_i}(0)\Big)
=4\pi.
$$
\end{lemma}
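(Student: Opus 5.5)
The plan is to interpret the statement cyclically: put $\theta_{n+1}:=\theta_1+2\pi$ and $\gamma_{n+1}:=\gamma_1$. The curves encircle the singular point once in the source, so the closing angle between $\Psi_{\gamma_n}(0)$ and $\Psi_{\gamma_1}(0)$ should be counted. The gap hypothesis $\theta_{i+1}-\theta_i<\pi/(2\alpha)$ is then assumed for the closing pair as well. Each summand $\Psi_{\gamma_{i+1}}(0)-\Psi_{\gamma_i}(0)$ is read as the inner angle $\angle_{\gamma_i,\gamma_{i+1}}0$ between consecutive initial vectors. With these conventions the assertion becomes a winding-number statement about the map $\theta\mapsto\Psi(\theta)$.

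First, reduce to the normal form. By Proposition \ref{prop:normalnotb} (and Theorem \ref{thm:normal1} with $k=3$) there are an adapted coordinate system and $T\in SO(3)$ bringing $f$ into the form \eqref{eq:normald4m}. The rotation $T$ preserves all angles between initial vectors. The coordinate changes used are orientation preserving and have identity linear part up to a rotation and scaling, so they preserve the cyclic order of the directions $\theta_i(0)$. I must check that the gap condition is measured in this normalized coordinate system, which is where $\alpha$ is defined. By \eqref{eq:psigamma} and Lemma \ref{lem:contvec}, $\Psi_{\gamma_i}(0)$ depends only on $\theta_i(0)$: it is the unit vector in the direction $V(\theta_i(0))=(\cos2\theta_i(0),\alpha\sin2\theta_i(0),0)$, which lies in the $XY$-plane.

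Next, use the angle function $W(\theta)$ from the proof of Lemma \ref{lem:angle}, namely the angle from $(1,0)$ to $V(\theta)$, chosen as a continuous lift on $\R$. That proof shows $2/\alpha\le W'(\theta)\le 2\alpha$, so $W$ is strictly increasing. Since $\theta_{i+1}-\theta_i<\pi/(2\alpha)$, the mean value theorem gives $0<W(\theta_{i+1})-W(\theta_i)<\pi$. Because all the vectors lie in one plane, the unoriented inner angle $\angle_{\gamma_i,\gamma_{i+1}}0=\cos^{-1}(\cdots)$ therefore equals exactly $W(\theta_{i+1})-W(\theta_i)$, with no reflection ambiguity. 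This is precisely where Lemma \ref{lem:angle} is needed.

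Finally, the sum telescopes:
\[
\sum_{i=1}^{n}\bigl(W(\theta_{i+1})-W(\theta_i)\bigr)=W(\theta_1+2\pi)-W(\theta_1).
\]
This difference is $2\pi$ times the degree of $\theta\mapsto V(\theta)/|V(\theta)|$ on $S^1$. Since $V$ is the image of $(\cos2\theta,\sin2\theta)$ under the orientation-preserving linear map $\mathrm{diag}(1,\alpha)$ with $\alpha>0$, the degree is $2$, and the total is $4\pi$.

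I expect the main obstacle to be bookkeeping rather than analysis. The statement as written, with the sum only to $n-1$, must be reconciled with the cyclic closing term. One also has to make sure that the inner angle, defined via $\cos^{-1}$, coincides with the signed increment of $W$ for every consecutive pair, including the closing one.
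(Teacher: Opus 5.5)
Your proposal is correct and follows the same route as the paper. Both reduce to the normal form so that all initial vectors lie in $(0,0,1)^\perp$, use Lemma \ref{lem:angle} to ensure each consecutive inner angle is the increment (less than $\pi$) of the angle of $(\cos2\theta,\alpha\sin2\theta)$, and identify the total with $2\pi$ times the winding number $2$ of this curve, which is unaffected by the scaling by $\alpha$. Your cyclic convention $\gamma_{n+1}=\gamma_1$, the continuous lift $W$ with the telescoping sum, and the check that the gap condition survives the conformal linear parts of the normalizing coordinate changes only make explicit what the paper's short proof leaves implicit.
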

\begin{proof}
We may assume $f$ is written as the right-hand side of
\eqref{eq:normald4m}.
Then all the initial vectors $\Psi_{\gamma_{i+1}}(0)$ $(i\in\{1,\ldots,n-1\})$
are in the plane $(0,0,1)^\perp$.
Since 
$\theta_{i+1}-\theta_i<\pi/(2\alpha)$ for any $i\in\{1,\ldots,n-1\}$,
and Lemma \ref{lem:angle},
the total sum of angles are equal to the winding number 
of the curve $((u^2-v^2)/2,\alpha u v)_{(u,v)=(r\cos\theta,r\sin\theta)}$ 
$(\theta\in[0,2\pi])$
for sufficiently small $r$ with respect to $(0,0)$.
Since it does not depend on the scaling $(x,y)\mapsto(x,y/\alpha)$,
the total sum of angles are $4\pi$.
\end{proof}

\subsection{Gauss-Bonnet type theorem}
Let\/ $M$ be a closed oriented surface, and
let\/ $f:M\to\R^3$ be a coorientable $1$-parameter-generic front with
a unit normal vector\/ $\nu$ defined on\/ $M$.
For an oriented local coordinate system $(u,v)$,
let $\lambda=\det(f_u,f_v,\nu)$ be a signed area density function.
We define $M_\pm=\{p\in M\,|\, \pm\lambda>0\}$.
It is known that for a singular point $p$ of
$1$-parameter-generic front satisfying $\rank df_p=1$ 
except for cuspidal edges
is
a {\it peak}. For a peak $p$, the {\it sign}, positive, zero or negative
is defined and it is denoted by $\sigma(p)\in\{-1,0,1\}$. 
See \cite[Section 2]{suykyushu} or \cite[Section 2]{suyfront}
for datail.
We set $P_\pm=\{p\in M\,|\,p\text{ is a peak, and }\sigma(p)=\pm1\}$,
and
$D_{4\pm}^-=\{p\in M\,|\,p\text{ is a }D_4^-\text{-singularity, and 
a punctured neighbourhood of $p$ lies in}
M_{\pm}\}$.
We also set $D_{4}^-=D_{4+}^-\cup D_{4-}^-$.

The initial vector of singular curves
emanating from a $D_4^+$-singularity $q$ is well-defined and
angle of two of such curves are the angle of the $D_4^+$-singularity,
which is denoted by $\theta(q)$.
See \cite[(2.32)]{d4geom} for datail.
We have the following theorem:
\begin{theorem}\label{thm:gb}
Let\/ $M$ be a closed oriented surface, and
let\/ $f:M\to\R^3$ be a coorientable $1$-parameter-generic front with
a unit normal vector\/ $\nu$ defined on\/ $M$.
Let\/
$\{d_1,\ldots,d_k\}$ be the set of\/ $D_4^+$-singularities,
and $\theta(d_i)$ be the angle of $D_4^+$-singularity.
Then it holds that
\begin{align*}
\int_MK\,dA+2\int_{S(f)}\kappa_s\,ds
=&
2\pi\chi(M)
+\sum_{i=1}^k(4\theta(q_i)-2\pi)
+2\pi\sharp(D_4^-),
\\
\int_MK\,d\hat{A}
=&
2\pi\Big(\chi(M_+)-\chi(M_-)\Big)
+
2\pi\Big(\sharp(D_{4+}^-)-\sharp(D_{4-}^-)\Big)\\
&\hspace{50mm}
+
2\pi\Big(\sharp(P_+)-\sharp(P_-)\Big)
\end{align*}
where\/ $\sharp(S)$ is the cardinality of the set $S$.
\end{theorem}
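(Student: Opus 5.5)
We follow the scheme of \cite[Section 2]{suyfront} and \cite[Sections 2,3]{suykyushu}. We take a triangulation of $M$ adapted to the singular set. Every edge is an admissible curve, and every vertex is either a regular point, a corank one singular point, or a corank two singular point ($D_4^\pm$). We arrange that $S(f)$ is a union of edges, that each $D_4^\pm$-point and each peak is a vertex, and that each $D_4^-$-point $d$ is the common vertex of $n$ triangles. These triangles are cut by admissible rays $\gamma_1,\dots,\gamma_n$ whose consecutive angles satisfy $\theta_{i+1}-\theta_i<\pi/(2\alpha)$.

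On each triangle $\Delta$ we apply the classical Gauss--Bonnet formula on $\Delta$ minus small neighbourhoods of its singular vertices and edges, and then pass to the limit. This gives
\[
\int_\Delta K\,dA=\sum(\text{inner angles})-\pi+\int_{\partial\Delta}\kappa_g\,ds .
\]
The limit is justified as follows. By Lemma~\ref{lem:kgbdd}, $\kappa_g\,ds$ and $K\,dA$, $K\,d\hat A$ are bounded forms, so the integrals converge. By Lemma~\ref{lem:contvec}, the initial vectors are continuous, so the inner angles are the limits of the angles at nearby regular points. Summing over all triangles, the interior geodesic curvature terms cancel in pairs. Along $S(f)$ they do not cancel but add up to $2\int_{S(f)}\kappa_s\,ds$, since the singular curvature is $\kappa_g$ with the appropriate sign. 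The remaining inner angles at regular vertices give $2\pi$ each, and so yield $2\pi\chi(M)$ once the usual count $V-E+F$ is carried out.

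Corrections arise only at singular vertices. At cuspidal edges and peaks they are as in \cite{suyfront,suykyushu}. In the $dA$-formula they vanish (total angle $2\pi$ at a peak). In the $d\hat A$-formula the angles are counted with the sign of $\lambda$, which produces $\chi(M_+)-\chi(M_-)$ and $\sharp P_+-\sharp P_-$. At a $D_4^+$-point we use the angle $\theta(q)$ of \cite{d4geom}; the four sectors contribute $4\theta(q)$ instead of $2\pi$. At a $D_4^-$-point $d$, the last lemma of the previous subsection shows that the sum of inner angles is $4\pi$ instead of $2\pi$, which gives the extra $2\pi$ per point in the first formula. In the second formula, $\lambda$ has constant sign on a punctured neighbourhood of $d$, because $\lambda$ is a nonzero multiple of $u^2+v^2$. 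Hence all sectors at $d$ carry the same sign, and the excess is $\pm2\pi$ according as $d\in D_{4\pm}^-$.

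The main obstacle is the local analysis at a $D_4^-$-point. We must justify that the triangulation can use rays satisfying $\theta_{i+1}-\theta_i<\pi/(2\alpha)$, with each ray admissible and each inner angle less than $\pi$ (Lemma~\ref{lem:angle}). We must also check that the classical angle sum converges to the sum of the limiting angles between the $\Psi_{\gamma_i}(0)$. To do this we will use the normal form of Theorem~\ref{thm:normal1} with $k=3$. In that form all $\Psi_{\gamma_i}(0)$ lie in the plane $(0,0,1)^\perp$, and their angular coordinate $W(\theta)$ is monotone, which makes the winding count $4\pi$ rigorous. A secondary point is the Euler characteristic bookkeeping for $M_\pm$ when $D_4^-$-points are isolated points of $S(f)$ lying inside $M_\pm$. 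Here we treat $d$ as a vertex of $M_\pm$; the triangles around it all belong to the same region, so $\chi(M_\pm)$ is computed as usual, with the angle excess isolated in the $\sharp D_{4\pm}^-$ term.
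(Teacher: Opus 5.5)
Your outline follows the paper's proof: triangulate, prove \eqref{eq:locgb} for admissible triangles by approximation, sum, and use the total angle $4\pi$ and the constant sign of $\lambda$ at a $D_4^-$-point. Your convention that a point of $D_{4\pm}^-$ is counted in $M_\pm$ is in fact necessary. Suppose $\lambda\ge0$ and all singular points are $D_4^-$-points. Then $dA=d\hat A$, and the two formulas agree only if those points lie in $M_+$; with the literal open sets $M_\pm=\{\pm\lambda>0\}$ the second formula is off by $2\pi(\sharp D_{4+}^- -\sharp D_{4-}^-)$.

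\textbf{The gap.} It is the limiting step at a $D_4^-$-vertex. You justify it by Lemmas~\ref{lem:kgbdd} and~\ref{lem:contvec}, exactly as the proof of Lemma~\ref{lem:locgb} does, but Lemma~\ref{lem:contvec} holds only when $c_0$ is tangent to $\gamma$ at $0$. Write points as $c_1+ic_2$ and put $L=\mathrm{diag}(1,\alpha)$. The formula in that proof says $d\hat c_t/dw|_{w=0}$ is $(p,q)$ applied to the complex product $c_w\,c$. For $t>0$ its direction therefore tends to that of $L(c_w(0,0)\gamma'(0))$. The initial vector of $c_0$ at $0$, by contrast, has the direction of $L(c_w(0,0)^2)$. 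Concretely, for $f_0^-$ with $\gamma(t)=(t,0)$ and $c_t(w)=(t,w)$, one gets $\Psi_c(\gamma(t))\to(0,1,0)$, while $\Psi_c(0)=(-1,0,0)$.

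Hence the angle of $\triangle A_tBC$ at $A_t$ does not tend to $\angle A$. The missing part is a turning of $f(A_tC)$ concentrated within distance $O(t)$ of $A_t$, where $\kappa_g\,ds$ is not uniformly bounded in $t$. So $\int_{A_tC}\kappa_g\,ds\not\to\int_{AC}\kappa_g\,ds$ either. Excising a disc $r<\ep$ fails the same way: the arc $r=\ep$ keeps a non-vanishing $\int\kappa_g\,ds$ in the limit (over a full circle it tends to $4\pi$). The two defects cancel, so \eqref{eq:locgb} is true, but your argument does not prove it.

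\textbf{The fix.} What is missing is a blow-up at the vertex. In the normal form of Theorem~\ref{thm:normal1} with $k=3$, and with $z=u+iv$, one has $\ep^{-2}f(\ep u,\ep v)\to(L(z^2/2),0)$ in $C^\infty$ on compact subsets of $\R^2\setminus\{0\}$, and $\nu\to(0,0,1)$. Angles and $\kappa_g\,ds$ are invariant under this rescaling. Take $T_\ep=T\cap\{r\ge\ep\}$. The exterior angles at its two new corners plus $\int\kappa_g\,ds$ along the arc $r=\ep$ converge to the corresponding quantity for the planar image sector of $z\mapsto L(z^2/2)$. By the planar Gauss--Bonnet formula, that quantity is $\pi-(W(\theta_2)-W(\theta_1))$. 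The remaining boundary and area terms converge by Lemma~\ref{lem:kgbdd}. This gives \eqref{eq:locgb} with $\angle A=W(\theta_2)-W(\theta_1)$, and your monotonicity of $W$ then yields the total $4\pi$.

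A simpler alternative is to remove the whole disc $r<\ep$ around each $D_4^-$-point. The rescaled image of $r=\ep$ tends to the ellipse $L(e^{2i\theta}/2)$ traversed twice, whose total curvature is $4\pi$ instead of $2\pi$. This gives the excess $2\pi$ per point directly and makes Lemma~\ref{lem:angle} unnecessary.
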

This is shown by the following local version of this theorem.
Let\/ $M$ be a closed oriented surface, and
let\/ $f:M\to\R^3$ be a coorientable $1$-parameter-generic front.
\begin{definition}
Let $\overline{T}$ be the closure of a simply connected domain $T$
which is bounded by three admissible curves 
$\gamma_i:[a,b]\to\partial T$ $(i=1,2,3)$.
Let 
$A=\gamma_2(b)=\gamma_3(a)$,
$B=\gamma_3(b)=\gamma_1(a)$,
$C=\gamma_1(b)=\gamma_2(a)$ be three 
intersections of $\gamma_i$ $(i=1,2,3)$.
We denote by this situation 
$T=\triangle ABC$, and say triangle $T$.
A triangle
$T=\triangle ABC$ is an
{\it admissible triangle} if 
$\gamma_i$ $(i=1,2,3)$ are admissible
and $T^\circ\cap S(f)=\emptyset$, and
the interior angles $\angle A$, $\angle B$, $\angle C$ as angles in $M$ 
is less than $\pi/2$.
Here $T^\circ$ stands for the interior of $T$.
\end{definition}
Then the following holds:
\begin{lemma}\label{lem:locgb}
For an admissible triangle $T=\triangle ABC$,
the following holds:
\begin{equation}\label{eq:locgb}
\angle A+\angle B+\angle C-\pi
=
\int_{\partial T}\hat\kappa_g\,ds
+
\int_{T}K\,dA.
\end{equation}
where $s$ is an arc-length parameter
of the corresponding curve.
Here,
$$
\hat\kappa_g=\left\{
\begin{array}{ll}
\kappa_g &\quad(\text{on}\ M_+),\\
-\kappa_g&\quad(\text{on}\ M_-),\\
\kappa_s &\quad(\text{on}\ S(f)).
\end{array}
\right.
$$
\end{lemma}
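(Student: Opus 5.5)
The plan is to reduce \eqref{eq:locgb} to the classical Gauss--Bonnet theorem on a slightly shrunk triangle inside the regular part, and then pass to the limit. This follows the strategy of \cite[Section 2]{suyfront} and \cite[Sections 2,3]{suykyushu}.

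Since $T^\circ\cap S(f)=\emptyset$ and $T$ is connected, $T^\circ$ lies entirely in $M_+$ or entirely in $M_-$. We treat the case $T^\circ\subset M_+$; in the $M_-$ case we reverse the orientation of $M$, which flips the sign of $\kappa_g$ and explains the definition of $\hat\kappa_g$. Choose a one-parameter family of triangles $T_\ep\subset T^\circ$ whose edges are admissible curves converging to $\gamma_1,\gamma_2,\gamma_3$. Edges lying in $S(f)$ are replaced by nearby parallel curves in $M_+$, and vertices are moved slightly into the interior along admissible curves $\gamma(t)$. On $T_\ep$ the map $f$ is an immersion, so the classical Gauss--Bonnet formula gives
\[
\angle A_\ep+\angle B_\ep+\angle C_\ep-\pi=\int_{\partial T_\ep}\kappa_g\,ds+\int_{T_\ep}K\,dA .
\]

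We then let $\ep\to0$ term by term. By Lemma \ref{lem:kgbdd}(2), $K\,dA$ is a bounded $2$-form, so $\int_{T_\ep}K\,dA\to\int_TK\,dA$ by dominated convergence. By Lemma \ref{lem:kgbdd}(1), $\kappa_g\,ds$ is a bounded $1$-form along curves of finite multiplicity, so boundary integrals over edges in $M_+$ converge. For an edge lying in $S(f)$, the geodesic curvature of the approximating parallel curves converges to the singular curvature $\kappa_s$. This is where the sign convention of \cite[Section 2]{suyfront} is used: $\kappa_s$ is defined exactly as the limit of $\kappa_g$ taken from the $M_+$ side with the appropriate orientation.

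For the angles, we use the continuity of initial vectors. At corank-one vertices this is \cite[Proposition 2.6]{suykyushu}, at $D_4^+$ points it is \cite[Lemma 4.3]{d4geom}, and at $D_4^-$ points it is Lemma \ref{lem:contvec}. Together these give $\angle A_\ep\to\angle A$, and likewise at $B$ and $C$. Because each interior angle is less than $\pi/2$, Lemma \ref{lem:angle} shows that the angle between the initial vectors is measured without ambiguity, i.e.\ it stays below $\pi$. So the limiting angle is indeed the inner angle $\angle_{\gamma_1,\gamma_2}$ and not its complement.

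The main obstacle is a vertex at a corank-two point, i.e.\ a $D_4^\pm$-singularity. There the angle of $T_\ep$ must converge to the angle between $\Psi_{\gamma_i}(0)$ given by \eqref{eq:psigamma}. This uniformity is not automatic, since the initial vectors depend on $\theta$ through the map $\theta\mapsto(\cos2\theta,\alpha\sin2\theta)$, which doubles angles. I would handle it using the normal form \eqref{eq:normald4m} with $k=3$. Writing the approximating curves in polar coordinates, I would show that their initial vectors at $\gamma(t)$ converge to $\Psi_\gamma(0)$ uniformly, via the expansion $\hat\gamma'=r\big((r'\cos2\theta,\alpha r'\sin2\theta,0)+O(r)\big)$. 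The admissibility condition (3), that $\gamma'(0)$ is not an isotropic direction of $\lambda$, guarantees that the leading term is nonzero, so the normalization in \eqref{eq:psigamma} is continuous.
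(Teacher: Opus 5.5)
Your route is the paper's: approximate $T$ from the regular part, apply the known formula, and pass to the limit using Lemma~\ref{lem:kgbdd} and continuity of initial vectors. The parts on $M_-$, singular edges and corank-one vertices can be delegated to the cited references, as the paper does. You also correctly single out the corank-two vertex as the crux, but the step you propose there is false.

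In adapted coordinates put $z=u+iv$. From $f_u=up+vq$ and $f_v=-vp+uq$, every curve $z(w)$ satisfies
\[
\frac{d}{dw}f(z(w))=\mathrm{Re}(z\,z_w)\,p+\mathrm{Im}(z\,z_w)\,q ,
\]
so the image direction is governed by $\arg z+\arg z_w$, not by $2\arg z$. The expansion $\hat\gamma'=r\big((r'\cos2\theta,\alpha r'\sin2\theta,0)+O(r)\big)$ you quote uses $r\theta'=O(r)$, i.e.\ a curve through the origin, where $\arg z_w\approx\arg z$. It does not apply to the edges of $T_\varepsilon$ at the moved vertex $A_\varepsilon=\varepsilon e^{i\theta_0}$.

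Let $\theta_B,\theta_C$ be the directions of the two edges at $A$, and set $L(x+iy)=(x,\alpha y,0)$. Edges converging (in $C^1$) to the original ones leave $A_\varepsilon$ in source directions close to $\theta_B,\theta_C$. Their image directions therefore tend to those of $L(e^{i(\theta_0+\theta_B)})$ and $L(e^{i(\theta_0+\theta_C)})$. They do not tend to the initial vectors \eqref{eq:psigamma}, which point along $L(e^{2i\theta_B})$ and $L(e^{2i\theta_C})$. For $\alpha=1$ this gives $\angle A_\varepsilon\to|\theta_C-\theta_B|$, which is half of $\angle A$.

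The deficit sits in geodesic curvature concentrated within distance $O(\varepsilon)$ of $A_\varepsilon$. For example, along the new edge toward $C$ the image tangent turns from $L(e^{i(\theta_0+\theta_C)})$ to $L(e^{2i\theta_C})$ there. Consequently $\int_{\partial T_\varepsilon}\kappa_g\,ds$ does not converge to $\int_{\partial T}\hat\kappa_g\,ds$ either. Lemma~\ref{lem:kgbdd}(1) controls $\kappa_g\,ds$ along a fixed curve, not uniformly along a family. Only the sum of the corner angle and these concentrated terms tends to $\angle A$, and that is exactly what has to be proved.

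Lemma~\ref{lem:contvec} does not rescue this. Its proof shows that $\lim_{t\to0}\Psi_c(\gamma(t))$ points along $L(c_0'(0)\,\gamma'(0))$ (complex product). That is the initial vector of $c_0$ only when $c_0$ is positively tangent to $\gamma$ at $0$. The paper's own proof moves $A$ to $A_t$ along the edge $AB$ and invokes that lemma for both $A_tB$ and $A_tC$. It is correct for $A_tB\subset AB$, but has exactly your defect for the new edge $A_tC$.

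A clean repair is to excise $D_\varepsilon=\{u^2+v^2<\varepsilon^2\}$ in the normal form and apply the formula to $T\setminus D_\varepsilon$, which has two new corners on the circle:
\begin{itemize}
\item The two new corner angles tend to the angles between $L(e^{2i\theta})$ (radial edge) and $\pm L(ie^{2i\theta})$ (arc) at $\theta=\theta_B$ and $\theta=\theta_C$.
\item The integral of $\kappa_g\,ds$ over the arc tends to the signed turning of $L(ie^{2i\theta})$ between $\theta_C$ and $\theta_B$.
\item With $W$ as in the proof of Lemma~\ref{lem:angle}, the identity $W(\theta+\pi/2)=W(\theta)+\pi$ makes these combine to exactly $\angle A=W(\theta_C)-W(\theta_B)$.
\item The remaining edge and area terms converge by Lemma~\ref{lem:kgbdd}.
\end{itemize}

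The same identity, together with the monotonicity of $W$, is also what your angle-ambiguity remark needs. Admissibility gives a source angle $<\pi/2$, not the $<\pi/(2\alpha)$ hypothesized in Lemma~\ref{lem:angle}. However, $<\pi/2$ already forces the image angle to be $<\pi$.
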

\begin{proof}
By considering subdivision of $T$,
it suffices to consider only the cases 
where at most one edge is a cuspidal edge and 
at most one vertex is a singular point other than a cuspidal edge.
See \cite[Section 2]{suykyushu} for 
the case $T\cap S(f)$ consists only of singularities 
of corank one.
See \cite[Proof of Theorem 4.4]{d4geom} 
for the case one point of $A,B,C$ is a
$D_4^+$-singular point and
at most one edge is a cuspidal edge.
Thus it is enough to show that the case that 
$A$ is a $D_4^-$-singular point,
and $(T\setminus\{A\})\cap S(f)=\emptyset$.
We take $A_t$ $(t\in[0,\ep))$ on the edge $AB$ where $A_0=A$,
see Figure \ref{fig:triangles}.
\begin{figure}[ht]
\begin{center}
\includegraphics[width=.2\linewidth]{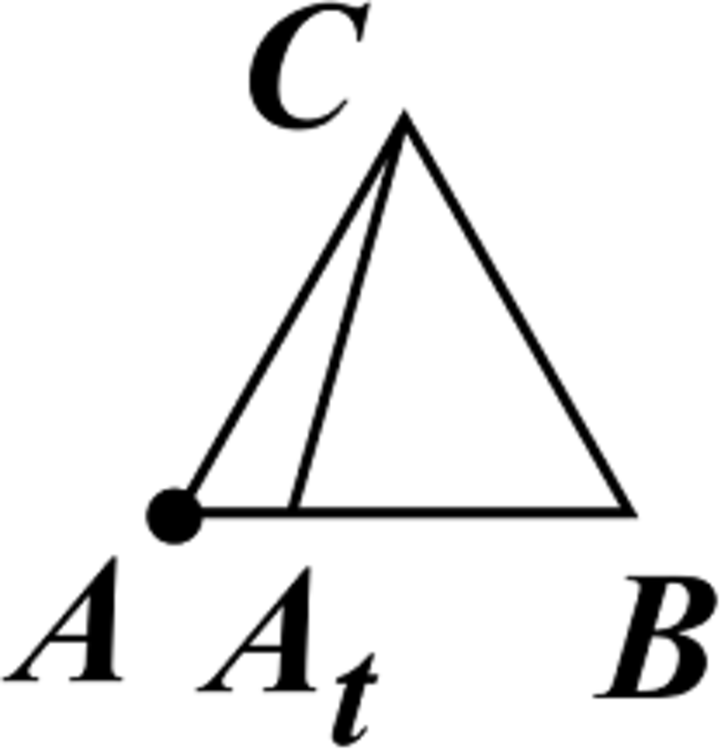}
\end{center}
\caption{Triangle}
\label{fig:triangles}
\end{figure}
Then \eqref{eq:locgb} holds for $\triangle A_tBC$ 
(\cite[Section 2]{suykyushu}).
By Lemma \ref{lem:contvec},
the initial vector of the edges $A_tB$, $A_tC$ at $A_t$ are well-defined
and converges to that of the edges $AB$, $AC$ at $A$ respectively.
Thus \eqref{eq:locgb} holds for $\triangle ABC$.
\end{proof}
\begin{proof}[Proof of Theorem\/ {\rm \ref{thm:gb}}]
Taking a triangulation of $M$ satisfying
all triangles are admissible, and
at most one edge is a cuspidal edge and 
at most one vertex is a singular point other than a cuspidal edge.
By Lemma \ref{lem:locgb},
the formula \eqref{eq:locgb}
holds for all triangles.
We look at a $D_4^-$-singularity $p$.
We assume $p$ is a vertex of a triangle $T$.

In the usual proof of the Gauss-Bonnet theorem, 
a regular point $q$ contributes a total interior 
angle of $2\pi$ from all triangles having $q$
as a vertex.  
In contrast, a $D_4^-$-singularity $p$ contributes $4\pi$, so that each
$p\in D_{4\pm}^-$ gives an excess contribution of $2\pi$.
Consequently, when taking the sum (respectively, the difference) of
\eqref{eq:locgb} over all triangles 
contained in $M_+$ and $M_-$, we obtain
the additional term
$2\pi\#(D_4^-)$
(respectively, $2\pi(\#(D_{4+}^-)-\#(D_{4-}^-))$).
The theorem then follows.
\end{proof}

\appendix
\section{Behavior of the geodesic and normal curvatures}
\label{sec:kgkn}
Let $f:(\R^2,0)\to(\R^3,0)$ be a frontal, and let
$\nu:(\R^2,0)\to\R^3$ be its unit normal vector field.
Let $\gamma:(\R,0)\to(\R^2,0)$ be a curve, and
$\hat\gamma=f\circ\gamma$.
We say that $\gamma$ or $\hat\gamma$ is a {\it frontal as a curve on 
a frontal} if there exist a function $l$ and a non-zero vector 
valued function $e$ such that 
$\hat\gamma'=le$, where $'=d/dt$ and $t$ is a parameter.
Let $s$ be an arc-length parameter of $\hat\gamma$, namely,
$ds=|\hat\gamma'|dt$.
Let $\kappa_g$ and $\kappa_n$ be the geodesic and
the normal curvatures, namely,
$$
\kappa_g=
\dfrac{\det(\hat\gamma',\hat\gamma'',\nu(\gamma))}{|\hat\gamma'|^3},\quad
\kappa_n=
\dfrac{\hat\gamma''\cdot\nu(\gamma)}{|\hat\gamma'|^2}.
$$
We set
$\kappa_g\,ds$ (respectively, $\kappa_n\,ds$)
and call it the {\it geodesic curvature measure}
(respectively, {\it normal curvature measure}\/).
Under this setting, we show the following claim.
\begin{proposition}
The geodesic curvature measure 
and the normal curvature measure are
bounded.
\end{proposition}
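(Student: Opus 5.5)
The plan is to use the frontal-as-a-curve factorization $\hat\gamma'=l\,e$ with $e$ nowhere zero, and to rewrite both measures in terms of $e$. The $1$-form $ds=|\hat\gamma'|\,dt=|l|\,|e|\,dt$, and the factor $l$ should cancel against the singular denominators.

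First, differentiate $\hat\gamma'=le$ to get
\[
\hat\gamma''=l'e+le'.
\]

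For the geodesic curvature measure, bilinearity of the determinant together with $\det(e,e,\nu)=0$ gives
\[
\det(\hat\gamma',\hat\gamma'',\nu(\gamma))=l^2\det(e,e',\nu(\gamma)).
\]
Substituting into $\kappa_g=\det(\hat\gamma',\hat\gamma'',\nu)/|\hat\gamma'|^3$, we get
\[
\kappa_g\,ds=\frac{l^2\det(e,e',\nu(\gamma))}{|l|^3|e|^3}\,|l|\,|e|\,dt
=\frac{\det(e,e',\nu(\gamma))}{|e|^2}\,dt .
\]
Here $l^2|l|/|l|^3=1$ wherever $l\neq0$. The right-hand side is smooth, since $|e|\ne0$ and $\nu$ is smooth along $\gamma$, so it extends continuously across the zeros of $l$. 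This proves boundedness of $\kappa_g\,ds$.

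For the normal curvature measure, the same substitution gives
\[
\kappa_n\,ds=\frac{(l'e+le')\cdot\nu(\gamma)}{l^2|e|^2}\,|l|\,|e|\,dt .
\]
The key step is to show that $e\cdot\nu(\gamma)=0$. Since $f$ is a frontal, $\hat\gamma'=df(\gamma')\perp\nu$, so $l\,(e\cdot\nu)=0$. Hence $e\cdot\nu=0$ wherever $l\neq0$. If the zeros of $l$ are isolated (which holds when $\gamma$ has finite multiplicity, as in Lemma~\ref{lem:kgbdd}), continuity gives $e\cdot\nu\equiv0$. Then
\[
\kappa_n\,ds=\sgn(l)\,\frac{e'\cdot\nu(\gamma)}{|e|}\,dt ,
\]
whose absolute value is bounded by a smooth function. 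So the measure is bounded, though possibly with a sign jump at zeros of $l$.

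The main obstacle I expect is the degenerate case where $l$ vanishes on an interval, so that the continuity argument for $e\cdot\nu=0$ fails there. On such an interval $\hat\gamma'\equiv0$ and $ds=0$, so both measures vanish identically and are trivially bounded. I would state this case separately. I would also note that boundedness means local boundedness of the coefficient of $dt$, which is the sense used for $\kappa_g\,ds$ in Lemma~\ref{lem:kgbdd} and in the computations \eqref{eq:kg2dir}, \eqref{eq:kn2dir}.
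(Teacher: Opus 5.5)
Your proposal is correct and is essentially the paper's proof. You substitute $\hat\gamma'=le$, $\hat\gamma''=l'e+le'$ and $ds=|l|\,|e|\,dt$, cancel the powers of $l$, and arrive at $\kappa_g\,ds=\det(e,e',\nu(\gamma))/|e|^2\,dt$ and $\kappa_n\,ds=\sgn(l)\,(e'\cdot\nu(\gamma))/|e|\,dt$, which are bounded because $e\ne0$.

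You are more careful than the paper, which silently uses $e\cdot\nu=0$ and records only the absolute values of these coefficients. Your continuity argument and the degenerate-interval case can be dropped: $e\cdot\nu=0$ holds pointwise wherever $l\ne0$, and that is the only place $\kappa_n$ is defined.
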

\begin{proof}
We show that the function $\phi_g$ (respectively, $\phi_n$)
is bounded 
when we write $\kappa_g\,ds = \phi_g(t)\,dt$
(respectively, $\kappa_n\,ds = \phi_n(t)\,dt$).
Since $\hat\gamma'=le$, we have
$\hat\gamma''=l'e+le'$.
Substituting these formulas and $ds=|\hat\gamma'|dt=|le|dt$,
we obtain
$$
\phi_g
=
\dfrac{|\det(e,e',\nu(\gamma))|}{|e|^2},\quad
\phi_n
=
\dfrac{|e'\cdot\nu(\gamma)|}{|e|}.
$$
This formula
and
$e\ne0$ show the assertion.
\end{proof}


\medskip
{\footnotesize
\begin{flushright}
\begin{tabular}{l}
Department of Mathematics,\\
Graduate School of Science, \\
Kobe University, \\
Rokkodai 1-1, Nada, Kobe \\
657-8501, Japan\\
E-mail: {\tt saji@math.kobe-u.ac.jp}
\end{tabular}
\end{flushright}}

\end{document}